\renewcommand{\H}{\mathbb{H}}
\newcommand{\CL}{\mathcal{L}}
\newcommand{\CM}{\mathcal{M}}
\newcommand{\CN}{\mathcal{N}}
\newcommand{\sgn}{\mbox{sgn}}
\newcommand{\SU}{\mathrm{SU}}
\newcommand{\N}{\mathbb N}
\newcommand{\C}{\mathbb C}
\theoremstyle{plain}
\newtheorem{theorem}{Theorem}[section]
\newtheorem{corollary}[theorem]{Corollary}
\newtheorem{lemma}[theorem]{Lemma}
\newtheorem{proposition}[theorem]{Proposition}
\newtheorem*{remark*}{Remark}
\newtheorem*{remarks*}{Remarks}
\theoremstyle{definition}
\newtheorem*{notation*}{Notation}
\newtheorem*{claim*}{Claim}
\numberwithin{equation}{section}
\renewcommand{\sgn}{\textnormal{sgn}}
\def\a{\alpha}
\def\b{\beta}
\def\p{\rho}
\def\w{\omega}
\def\z{\zeta}
\def\th{\theta}
\def\s{\sigma}
\def\g{\gamma}
\def\a{\alpha}
\def\b{\beta}
\def\p{\varrho}
\def\w{\omega}
\def\z{\zeta}
\def\th{\theta}
\def\s{\sigma}
\def\g{\gamma}
\def\GG{\Gamma}
\def\LL{\Lambda}
\def\GG{\Gamma}
\def\LL{\Lambda}
\newcommand{\re}{{\rm Re}}
\renewcommand{\sgn}{{\rm sgn}}
\newcommand{\R}{\mathbb R}
\renewcommand{\binom}[2]{\left(\begin{smallmatrix}#1\\\\#2\end{smallmatrix}\right)}
\setlist[itemize]{noitemsep, topsep=0pt}
\newcommand{\vast}{\bBigg@{3}}
\newcommand{\Vast}{\bBigg@{5}}
\renewcommand{\pmod}[1]{\ \left( \mathrm{mod} \, #1 \right)}
\begin{document}
\title{Asymptotics of commuting $\ell$-tuples in symmetric groups and log-concavity}
\author{Kathrin Bringmann, Johann Franke, and Bernhard Heim}
\address{Department of Mathematics and Computer Science\\Division of Mathematics\\University of Cologne\\ Weyertal 86-90 \\ 50931 Cologne \\Germany}
\email{kbringma@math.uni-koeln.de}
\email{jfranke@uni-koeln.de}
\email{bheim@uni-koeln.de}
\subjclass[2020] {Primary 05A17, 11P82; Secondary 05A20}
\keywords{Generating functions, log-concavity, partition numbers, symmetric group.}
\begin{abstract}
Denote by $N_{\ell}(n)$ the number of $\ell$-tuples of elements in the symmetric group $S_n$ with commuting components, normalized by the order of $S_n$. In this paper, we prove asymptotic formulas for $N_\ell(n)$. In addition, general criteria for log-concavity are shown, which can be applied to $N_\ell(n)$ among other examples. Moreover, we obtain a Bessenrodt–Ono type theorem which gives an inequality of the form $c(a)c(b) > c(a+b)$ for certain families of sequences $c(n)$.

\end{abstract}
\maketitle
\section{Introduction and statement of results}
In this paper we consider asymptotics for commuting $\ell$-tuples in $S_n$, where $S_n$ denotes the symmetric group for $n,\ell \in \N$. To be more precise, let $|M|$ be the cardinality of a set $M$ and define
\begin{equation}
C_{\ell,n}:=
\left\{ (\pi_1, \ldots, \pi_{\ell}) \in S_n^{\ell} :  \pi_j  \pi_k = \pi_k  
\pi_j \text{ for } 1 \leq j,k \leq \ell \right\}.
\end{equation}
The numbers $|C_{\ell,n}|$ are divisible by $\left| S_n \right|$ and appear as the specialization of the $\ell$-th orbifold characteristic to the $n$-th symmetric product of a manifold of ordinary Euler characteristic $1$ (see \cite{ABDV23} for a combinatorial approach and Theorem 2.1 of \cite{BF98}).
In this paper, we prove asymptotics and log-concavity of 
	\begin{equation} N_{\ell}(n):= \frac{\left| C_{\ell,n}\right|}
	{\left| S_n \right|}.
	\end{equation}
\par
Bryan and Fulman \cite{BF98}\footnote{The result of Bryan--Fulman had been strongly influenced by Stanley (\cite{BF98}, acknowledgments).} proved the following.\footnote{Further proofs have been obtained by \cite{ABDV23} and \cite{Wh13}.
	The work of Bryan and Fulman can be considered as
	a generalization of a combinatorial formulae due to Macdonald \cite{Ma62}
	and Hirzebruch and H\"ofer \cite{HH90}. }
\begin{theorem}[Bryan and Fulman, see also \cite{ABDV23}, p. 3]\label{th:BF}
	For $\ell \in \N$, we have
	\begin{equation*} 
		\sum_{n=0}^{\infty} \vert{C}_{\ell,n}\vert  \frac{q^n}{n!} =\prod_{n=1}^{\infty} \left( 1-q^n\right)^{- g_{\ell-1}(n)} =
		\exp \left( \sum_{n=1}^{\infty} g_{\ell}(n)  \frac{q^n}{n}\right),
	\end{equation*}
	where $g_{\ell}(n)$ denote the number of subgroups of $\mathbb{Z}^{\ell}$ of index $n$ (we refer to the work of Lubotzky and Segal \cite{LUSE} for more background on the group theoretic interpretation). 
\end{theorem}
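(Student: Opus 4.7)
My plan is to derive the middle equality from the labeled exponential formula applied to $\mathbb{Z}^\ell$-actions, then deduce the first equality via a classical divisor-sum identity.

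A commuting $\ell$-tuple in $S_n$ is the same as a homomorphism $\mathbb{Z}^\ell \to S_n$, i.e.\ a left $\mathbb{Z}^\ell$-action on $\{1,\dots,n\}$. Every such action decomposes uniquely into orbits, each a transitive $\mathbb{Z}^\ell$-set, so the labeled exponential formula gives
\[
\sum_{n \geq 0} |C_{\ell,n}|\,\frac{q^n}{n!} \;=\; \exp\!\left(\sum_{k \geq 1} T_\ell(k) \, \frac{q^k}{k!}\right),
\]
where $T_\ell(k)$ is the number of transitive $\mathbb{Z}^\ell$-actions on $\{1,\dots,k\}$. A transitive action is specified by the stabilizer $H$ of the point $1$ (necessarily an index-$k$ subgroup of $\mathbb{Z}^\ell$) together with a bijection $\mathbb{Z}^\ell/H \to \{1,\dots,k\}$ sending $H \mapsto 1$, and distinct such bijections yield distinct actions. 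Hence $T_\ell(k) = g_\ell(k)\,(k-1)!$, turning the right-hand side into $\exp\bigl(\sum_{k \geq 1} g_\ell(k)\,q^k/k\bigr)$, which is the middle equality.

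For the first equality I take logarithms and expand $-\log(1-q^n) = \sum_{k \geq 1} q^{nk}/k$ to obtain
\[
\log \prod_{n \geq 1}(1-q^n)^{-g_{\ell-1}(n)} \;=\; \sum_{N \geq 1} \frac{q^N}{N}\,\sum_{n \mid N} n\,g_{\ell-1}(n).
\]
Comparing with the already-established middle form reduces the first equality to the arithmetic identity $g_\ell(N) = \sum_{n \mid N} n\,g_{\ell-1}(n)$ for every $N \geq 1$. This I will derive from the classical Dirichlet-series factorization $G_\ell(s) := \sum_N g_\ell(N)\,N^{-s} = \prod_{j=0}^{\ell-1}\zeta(s-j)$, obtained by enumerating index-$N$ sublattices of $\mathbb{Z}^\ell$ via Hermite normal form; writing it as $G_\ell(s) = \zeta(s)\,G_{\ell-1}(s-1)$, recognising $G_{\ell-1}(s-1)$ as the Dirichlet series of $n \mapsto n\,g_{\ell-1}(n)$, and reading multiplication by $\zeta(s)$ as Dirichlet convolution with $1$ (i.e.\ a divisor sum) yields the identity.

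The main obstacle I expect is the bookkeeping in the transitive-action count, in particular verifying that distinct bijections $\mathbb{Z}^\ell/H \to \{1,\dots,k\}$ (with $H\mapsto 1$) give genuinely distinct actions so that the count equals exactly $(k-1)!$ per subgroup $H$; the remaining steps are routine manipulations of generating and Dirichlet series.
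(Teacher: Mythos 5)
Your proposal is correct. Note, however, that the paper itself offers no proof of this statement: it is quoted as a theorem of Bryan and Fulman \cite{BF98} (with alternative proofs attributed to \cite{ABDV23} and \cite{Wh13}), so there is no internal argument to compare against. Your route is a clean, self-contained version of the standard one: identifying $C_{\ell,n}$ with $\mathrm{Hom}(\mathbb{Z}^\ell,S_n)$, i.e.\ $\mathbb{Z}^\ell$-actions on $\{1,\dots,n\}$, applying the exponential formula to the orbit decomposition, and counting transitive actions on a $k$-set as $g_\ell(k)(k-1)!$ — your worry about that count is resolved exactly as you suggest, since a $G$-equivariant self-map of $G/H$ fixing the base coset is the identity, so the action determines and is determined by the pair $(H,\phi)$. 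For the first equality, the reduction to $g_\ell(N)=\sum_{n\mid N} n\,g_{\ell-1}(n)$ and its derivation from $\sum_N g_\ell(N)N^{-s}=\prod_{j=0}^{\ell-1}\zeta(s-j)$ is also sound; in fact this factorization is precisely what the paper records in \eqref{eq:Lg-Zeta} (Solomon \cite{So79}), so you could cite that rather than redo the Hermite normal form count. The only cosmetic check worth adding is the degenerate case $\ell=1$ (where $g_0(n)=\delta_{n,1}$ and all three expressions equal $(1-q)^{-1}$), which your argument covers but is worth stating since the theorem is asserted for all $\ell\in\N$.
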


Since $g_2(n)=\sigma_1(n)$, where for $m \in \N$, $\sigma_m(n) := \sum_{d \vert n} d^m$, we obtain that
$N_{2}(n)=p(n)$, where $p(n)$ denotes the number of partitions of $n$ (for more background, see \cite{An98, On04}).
Thus,
$N_{2}(n)$ equals to the number of conjugacy classes in $S_n$
\cite{ET68}. Note that $\ell =3$ reveals an interesting connection to topology 
as highligted by Britnell (see introduction of \cite{Br13}) due to work of
Liskovets and Medynkh \cite{LM09}. Namely $N_{3}(n)$ counts the number
of non-equivalent $n$-sheeted coverings of a torus.

Recently, Neuhauser and one of the  authors \cite{HN23} proved that
$N_{\ell}(n)$ is log-concave for $n\geq 20$ for almost all $\ell$ if and only if $ n \equiv 0 \pmod{3}$. Moreover it was shown by Nicolas (\cite{Ni78}, Proposition 1) and reproved by DeSalvo and Pak (\cite{DP15}, Theorem 1.1) that
$N_2(n)$ is log-concave for $n \geq 26$. In \cite{HN23} it was conjectured that
$N_3(n)$ is also log-concave for $n \geq 22$ (numerically verified there  for $n \leq 10^5$).
Based on numerical experiments they also 
speculated that for fixed $\ell$, $N_{\ell}(n)$ is log-concave for almost all $n$.

In this paper, using results by Bridges, Brindle, and two of the authors in \cite{BBBF23}, we prove asymptotic formulas for $N_\ell(n)$ for arbitrary $\ell$, and partially answer the question (5) posed in Section 6 there. Note that we are in the situation of multiple poles which is not covered by the classical result of Meinardus \cite{Meinardus}. 
\begin{theorem} \label{thm:Main-Nl}  For $\ell \geq 6$ we have,\footnote{Here and in the following such series are meant as asymptotic expansions.} as $n \to \infty$,
	\begin{align*} 
	N_\ell(n) \sim \frac{(\ell-1)!^{\frac{1}{2\ell}} \sqrt{Z_\ell}}{\sqrt{2\pi \ell} n^{\frac{\ell+1}{2 \ell}}} \exp\left( \frac{\ell \Gamma(\ell)^{\frac{1}{\ell}}Z_\ell}{\ell-1} n^{\frac{\ell-1}{\ell}} + \sum_{k=2}^{\ell} A_{\ell,k} n^{\frac{\ell-k}{\ell}}\right)\left( 1 + \sum_{j=1}^\infty \frac{B_{\ell,j}}{n^{\frac{j}{\ell}}}\right),
	\end{align*}
	where $Z_\ell := (\zeta(2) \cdot \zeta(3) \cdots \zeta(\ell))^{\frac{1}{\ell}}$ for certain $A_{\ell,k}$ and $B_{\ell,j}$.
\end{theorem}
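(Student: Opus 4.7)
The plan is to invoke the general asymptotic theorem for infinite product generating functions from \cite{BBBF23}, which extends Meinardus' framework to Dirichlet series with several simple poles on the positive real axis. By Theorem~\ref{th:BF}, $F_\ell(q) := \sum_n N_\ell(n) q^n = \prod_{n \geq 1}(1-q^n)^{-g_{\ell-1}(n)}$, and the Dirichlet series of the exponents factors classically as
\[
L_\ell(s) := \sum_{n \geq 1} \frac{g_{\ell-1}(n)}{n^s} = \prod_{j=0}^{\ell-2} \zeta(s-j),
\]
with simple poles precisely at $s = 1, 2, \ldots, \ell-1$ and residue at $s = k+1$ equal to $\prod_{0 \leq j \leq \ell-2,\, j \neq k} \zeta(k+1-j)$. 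Standard convexity bounds for $\zeta$ supply the polynomial growth on vertical strips required by \cite{BBBF23}, and for $\ell \geq 4$ the identity $\zeta(-2) = 0$ forces $L_\ell(0) = 0$, suppressing any potential $\log t$ contribution at $s = 0$ and matching the clean power-series form of the claim.

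Next I would carry out the Mellin transform analysis. Starting from $-\log(1 - e^{-nt}) = \frac{1}{2\pi i}\int_{(c)}\Gamma(s)\zeta(s+1)(nt)^{-s}\, ds$ and shifting the contour past the poles of $\Gamma(s)\zeta(s+1) L_\ell(s) t^{-s}$, one obtains
\[
\log F_\ell(e^{-t}) = \sum_{k=1}^{\ell-1} \frac{a_{\ell,k}}{t^k} + c_\ell + O(t) \qquad (t \to 0^+),
\]
with leading coefficient $a_{\ell,\ell-1} = \Gamma(\ell-1)\zeta(\ell)\prod_{j=2}^{\ell-1}\zeta(j) = \Gamma(\ell-1) Z_\ell^\ell$ from the rightmost pole at $s = \ell-1$, the remaining $a_{\ell,k}$ determined by the other residues of $L_\ell$ together with the values of $\Gamma(s)\zeta(s+1)$ there, and a constant $c_\ell$ from the $s = 0$ contribution.

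Then I would apply the saddle-point method to $N_\ell(n) = \frac{1}{2\pi i}\oint F_\ell(q) q^{-n-1}\, dq$. The saddle $t_0$ of $\phi_n(t) := \log F_\ell(e^{-t}) + nt$ satisfies to leading order $(\ell-1) a_{\ell,\ell-1} t_0^{-\ell} = n$, giving $t_0 = \Gamma(\ell)^{1/\ell} Z_\ell\, n^{-1/\ell}$, and substituting produces exactly the claimed main exponent $\frac{\ell\, \Gamma(\ell)^{1/\ell} Z_\ell}{\ell-1}\, n^{(\ell-1)/\ell}$. Moreover $\phi_n''(t_0) = \ell\, \Gamma(\ell)^{-1/\ell} Z_\ell^{-1}\, n^{(\ell+1)/\ell}$, so $\frac{1}{\sqrt{2\pi \phi_n''(t_0)}}$ reproduces precisely the Gaussian prefactor $(\ell-1)!^{1/(2\ell)} \sqrt{Z_\ell}/(\sqrt{2\pi\ell}\, n^{(\ell+1)/(2\ell)})$. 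The subleading exponential corrections $A_{\ell,k} n^{(\ell-k)/\ell}$ for $k = 2, \ldots, \ell$ arise from iteratively refining $t_0$ against the lower-order Mellin terms $a_{\ell,k} t^{-k}$ with $k < \ell-1$ (and absorbing the constant $c_\ell$ at $k = \ell$), while the series $1 + \sum_{j \geq 1} B_{\ell,j} n^{-j/\ell}$ comes from the full Laplace expansion of the saddle-point integral to all orders.

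The main obstacle is the minor-arc estimate: one must bound $|F_\ell(e^{-t_0 + i\theta})|$ uniformly for $|\theta|$ bounded away from $0$ sharply enough to truncate the contour integral to a narrow neighborhood of the saddle. This is the core technical content of \cite{BBBF23} in the multi-pole setting, and the hypothesis $\ell \geq 6$ presumably enters through the requirement that all of the $\ell-1$ pole contributions can be handled uniformly against the saddle width $\sim n^{-(\ell+1)/(2\ell)}$ (and possibly through the extra vanishing $L_\ell'(0) = 0$, which holds for $\ell \geq 6$). Once this estimate is in place, the identification of the explicit constants $A_{\ell,k}$ and $B_{\ell,j}$ reduces to systematic bookkeeping of the residue data for $L_\ell$ combined with the standard Laplace expansion.
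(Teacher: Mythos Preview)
Your overall plan—verify the hypotheses of \cite{BBBF23} for the weight function $g_{\ell-1}$ and read off the constants—is exactly what the paper does, and your leading exponent and Gaussian prefactor are computed correctly. However, there is a factual slip that affects the structure of everything below leading order: you assert that $L_\ell(s)=\prod_{j=0}^{\ell-2}\zeta(s-j)$ has simple poles at \emph{all} of $s=1,\dots,\ell-1$. This is false for $\ell\ge5$. At $s=k+1$ with $k\le\ell-5$ the pole of $\zeta(s-k)$ is killed by a trivial zero $\zeta(-2m)=0$ coming from another factor, so for every $\ell\ge4$ the function $L_\ell$ has exactly \emph{three} positive poles, at $s\in\{\ell-3,\ell-2,\ell-1\}$ (this is Proposition~\ref{prop:Poles-of-Lg}). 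The paper's determination of the subleading exponents $A_{\ell,k}$ is organized entirely around the resulting three-term Mellin expansion \eqref{eq:Phi-prime}: the saddle equation is cubic in $\varrho^{-1}$ up to negligible terms, and Lemma~\ref{lem:rho-expand} solves it by Lagrange inversion to feed into \eqref{eq:Ak}. Your picture of the iterative saddle refinement therefore has the wrong number of input terms; the formulas you would write down for the $a_{\ell,k}$ with $1\le k\le\ell-4$ are all zero.

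Your speculation about the role of the hypothesis $\ell\ge6$ is also off. It has nothing to do with the minor-arc bound or with handling ``$\ell-1$ pole contributions'': the machinery of \cite{BBBF23} applies uniformly, and the paper treats $\ell\in\{4,5\}$ by the identical method in Theorem~\ref{thm:N345}. The sole reason $\ell\ge6$ is singled out in Theorem~\ref{thm:Main-Nl} is cosmetic: for $\ell\ge6$ both $\zeta(-2)$ and $\zeta(-4)$ occur among the factors of $L_\ell$ at $s=0$, so $L_\ell$ has at least a double zero there and $L_\ell'(0)=0$, which makes the prefactor $e^{L_\ell'(0)}$ disappear from the formula. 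For $\ell\in\{4,5\}$ only $\zeta(-2)$ is present, $L_\ell'(0)\ne0$, and an extra explicit constant survives in front.
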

\begin{remarks*}\ 
	\begin{enumerate}[leftmargin=*,label=\textnormal{(\arabic*)}]
		\item We give a more explicit description for the constants $A_{\ell,k}$ in \eqref{eq:Ak}. Theoretically, the values $B_{\ell,j}$ can also be calculated explicitly. They result from the calculation methods in \cite{BBBF23} by a rather complicated procedure.
		\item The cases $\ell \in \{3,4,5\}$ are simpler and of special interest. They are treated separately in \Cref{thm:N3} and \Cref{thm:N345}.
	\end{enumerate}
\end{remarks*}
It turns out that asymptotics like the one in \Cref{thm:Main-Nl} are sufficient to prove log-concavity\footnote{For the definition, more background and applications in combinatorics see Stanley \cite{St89} and Section $5$ of Br\"and\'en \cite{Braenden}.} of sequences. Recall that a sequence $a_n$ is called {\it log-concave} if $a_n^2\ge a_{n+1}a_{n-1}$. We prove the following general result. 
\begin{theorem}\label{thm:log}
	Assume that $c(n)$ is a sequence with
	\begin{equation*}
	c(n) \sim \frac C{n^\kappa} \exp\left(\sum_{\lambda\in\mathcal S}A_\lambda n^\lambda \right) \sum_{\mu\in\mathcal T} \frac{\beta_\mu}{n^\mu} \qquad (n \to \infty).
	\end{equation*}
Here $\kappa\in\R$, $\mathcal S\subset\mathbb Q^+ \cap (0,1)$ is finite, $\mathcal T\subset\mathbb Q_{0}^+$, $C$, $A_\lambda$, $\beta_\mu\in\R$ with $\beta_0 = 1$. Let $\lambda^{\ast} := \max\{\lambda \in \mathcal{S}: A_{\lambda} \neq 0\}$ and assume that $A_{\lambda^*}>0$. Then, for $n$ sufficiently large, $c(n)$ is log-concave.\footnote{The result can probably also be extended to real exponents, but this case is not required for any of our applications and thus we do not allow this case here.} 
\end{theorem}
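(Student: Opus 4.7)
The plan is to establish log-concavity by showing that the second difference $\Delta^2 \log c(n) := \log c(n+1) - 2\log c(n) + \log c(n-1)$ is eventually negative, which is equivalent to $c(n)^2 > c(n-1)c(n+1)$. First I would take the logarithm of the asymptotic expansion: because $\beta_0 = 1$, the sum $\sum_{\mu \in \mathcal{T}} \beta_\mu n^{-\mu}$ tends to $1$, and expanding $\log(1+z)$ as a Taylor series in $z = \sum_{\mu > 0} \beta_\mu n^{-\mu}$ yields an asymptotic expansion
\begin{equation*}
\log c(n) \sim \log C - \kappa \log n + \sum_{\lambda \in \mathcal{S}} A_\lambda n^\lambda + \sum_{\mu > 0} \gamma_\mu n^{-\mu},
\end{equation*}
where the $\gamma_\mu$ are explicit polynomials in the $\beta_\nu$ and the exponents $\mu$ range over a set of positive rationals.

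Next I would apply $\Delta^2$ termwise using the identity $\Delta^2 f(n) = \int_{-1}^{1}(1-|t|) f''(n+t)\,dt$, valid for $f \in C^2$. For the basic building blocks one computes
\begin{align*}
\Delta^2 n^\lambda &= \lambda(\lambda - 1) n^{\lambda - 2}\bigl( 1 + O(n^{-2})\bigr), \\
\Delta^2 \log n &= -n^{-2}\bigl( 1 + O(n^{-2})\bigr), \\
\Delta^2 n^{-\mu} &= \mu(\mu+1) n^{-\mu - 2}\bigl( 1 + O(n^{-2})\bigr).
\end{align*}
Substituting, the leading contribution to $\Delta^2 \log c(n)$ comes from the exponent $\lambda^*$, giving
\begin{equation*}
\Delta^2 \log c(n) = A_{\lambda^*} \lambda^*(\lambda^* - 1)\, n^{\lambda^* - 2} + o\bigl(n^{\lambda^* - 2}\bigr).
\end{equation*}
Since $\lambda^* \in (0,1)$, we have $\lambda^*(\lambda^* - 1) < 0$, and together with the hypothesis $A_{\lambda^*} > 0$ this makes the leading coefficient strictly negative. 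All remaining contributions (from $-\kappa\log n$, from $A_\lambda n^\lambda$ with $\lambda < \lambda^*$, and from $\gamma_\mu n^{-\mu}$) are $o(n^{\lambda^* - 2})$ because $\lambda^* > 0$, and hence are swamped by the dominant negative term.

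The only delicate point is justifying the termwise manipulation of the asymptotic expansion. One has to truncate the expansion of $c(n)$ to high enough order in $n^{-1}$, verify that the resulting $O(n^{-N})$ relative error translates to an additive $O(n^{-N})$ error in $\log c(n)$ (which is immediate since the prefactor $\frac{C}{n^\kappa} e^{\sum_\lambda A_\lambda n^\lambda}$ divides out), and then apply $\Delta^2$ to the finite remaining sum term by term. I expect this bookkeeping to be the main, but purely technical, obstacle; no new analytic input beyond the stated asymptotic is needed.
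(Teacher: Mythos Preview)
Your proposal is correct and arrives at the same decisive computation as the paper, namely $(n+1)^{\lambda}+(n-1)^{\lambda}-2n^{\lambda}=\lambda(\lambda-1)n^{\lambda-2}(1+O(n^{-2}))$, but the packaging is different. The paper works multiplicatively: it writes out $c(n)^2-c(n+1)c(n-1)$, factors off $C^2 n^{-2\kappa}\exp(2\sum_\lambda A_\lambda n^\lambda)$, and then tracks the surviving terms through several expansions (of $\exp(\sum_\lambda A_\lambda((n+1)^\lambda+(n-1)^\lambda-2n^\lambda))$, of $(n^2-1)^{-\kappa}$, and of the cross-products of the $\beta_\mu$-sums) until only $-\gamma_{\lambda^*,1}n^{-(2-\lambda^*)}$ remains. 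You instead pass to $\log c(n)$ at the outset, so the exponential disappears and the problem becomes a termwise second difference of a finite asymptotic sum plus a controlled remainder. This is shorter and more transparent; in particular, the $\beta_\mu$-contributions and the $n^{-\kappa}$ factor are handled uniformly rather than by separate lemmas, and the integral representation $\Delta^2 f(n)=\int_{-1}^1(1-|t|)f''(n+t)\,dt$ makes the size estimates immediate.

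One point worth making explicit in your write-up: when you apply $\Delta^2$ to the additive $O(n^{-N})$ error in $\log c(n)$, the best you get without further information is again $O(n^{-N})$, not $O(n^{-N-2})$. Your parenthetical ``truncate \dots\ to high enough order'' covers this (take $N>2-\lambda^*$), but stating it prevents a reader from wondering whether you implicitly assumed the error term itself admits a second-difference improvement.
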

In particular, from \Cref{thm:log} and \Cref{thm:Main-Nl}, we conclude the following.
\begin{corollary} \label{cor:log-concave-Nl}
	Let\footnote{Note that the case $\ell=1$ is trivially true.} $\ell \geq 2$. For $n$ sufficiently large $N_{\ell}(n)$ is log-concave.
\end{corollary}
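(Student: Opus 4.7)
The plan is to deduce the corollary by pattern-matching the asymptotics provided by \Cref{thm:Main-Nl} (for $\ell\geq 6$) and by the forthcoming \Cref{thm:N3} and \Cref{thm:N345} (for $\ell\in\{3,4,5\}$) against the hypotheses of the general log-concavity criterion \Cref{thm:log}, together with invoking the classical log-concavity of $p(n)=N_2(n)$ for $\ell=2$.

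For $\ell\geq 6$ the verification is essentially a matter of identification. Reading off from \Cref{thm:Main-Nl}, I would take $\kappa=(\ell+1)/(2\ell)$, pull the $k=\ell$ summand $A_{\ell,\ell}\,n^{0}$ out of the exponential into a new prefactor
\[
C := \exp(A_{\ell,\ell})\,\frac{(\ell-1)!^{1/(2\ell)}\sqrt{Z_\ell}}{\sqrt{2\pi\ell}},
\]
set $\mathcal{S}=\{k/\ell : 1\leq k\leq \ell-1\}\subset \mathbb{Q}^+\cap(0,1)$, and take $\mathcal{T}=\{j/\ell : j\geq 0\}\subset \mathbb{Q}_{0}^+$ with $\beta_0=1$ and $\beta_{j/\ell}=B_{\ell,j}$. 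The crucial positivity condition is then readily checked: the dominant exponent is $\lambda^{\ast}=(\ell-1)/\ell$, attained by the leading term with coefficient $\tfrac{\ell\,\Gamma(\ell)^{1/\ell}Z_\ell}{\ell-1}>0$, since $Z_\ell>0$. With all hypotheses of \Cref{thm:log} verified, log-concavity of $N_\ell(n)$ for $n$ sufficiently large follows immediately.

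For $\ell\in\{3,4,5\}$, I would carry out the identical verification using \Cref{thm:N3} and \Cref{thm:N345} in place of \Cref{thm:Main-Nl}, the point being that those asymptotics have the same structural shape (rational exponents strictly between $0$ and $1$, positive dominant coefficient), so \Cref{thm:log} applies verbatim. For $\ell=2$, \Cref{th:BF} combined with $g_2=\sigma_1$ reduces the claim to log-concavity of $p(n)$ for $n\geq 26$, which is the theorem of Nicolas reproved by DeSalvo--Pak cited in the introduction. Overall there is no genuine obstacle in the corollary itself — all the substantive work lives in the asymptotic theorems and in \Cref{thm:log}. The only piece of bookkeeping worth flagging is the removal of the $n^{0}$ term from the exponential into the prefactor $C$, which is necessary because \Cref{thm:log} requires $\mathcal{S}$ to lie strictly inside the open interval $(0,1)$.
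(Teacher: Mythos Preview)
Your proposal is correct and is precisely the argument the paper intends: the corollary is presented without a separate proof, introduced only by the sentence ``In particular, from \Cref{thm:log} and \Cref{thm:Main-Nl}, we conclude the following.'' The verification you spell out---identifying $\kappa$, $\mathcal S$, $\mathcal T$, extracting the $n^0$ term from the exponential into the prefactor so that $\mathcal S\subset(0,1)$, checking $A_{\lambda^*}>0$, and handling $\ell\in\{2,3,4,5\}$ via \Cref{thm:N3}, \Cref{thm:N345}, and the Nicolas/DeSalvo--Pak result---is exactly what the paper leaves implicit.
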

\begin{remark*} In \Cref{cor:pkrr} we give as further examples partitions into $k$-gonal numbers, as well as $n$-dimensional representation numbers of the groups $\mathfrak{su}(3)$ and $\mathfrak{so}(5)$. 
	\end{remark*}
	\par
In 2016, Bessenrodt and Ono (\cite{BessOno}, Theorem 2.1) proved that if $a, b \in \N$ satisfy $a, b > 1$ and $a+b > 8,$ then $p(a)p(b) \geq p(a+b)$ with equality if and only if $\{a,b\} = \{2,7\}$. We also show a Bessenrodt--Ono type theorem for general sequences, which implies Bessenrodt's and Ono's result on $p(n)$ for $a$, $b$ sufficiently large.
\begin{theorem} \label{thm:BO}
		Let $c(n)$ be a sequence satisfying
	\begin{equation*}
	c(n) \sim \frac C{n^\kappa} \exp\left(\sum_{\lambda\in\mathcal S} A_\lambda n^\lambda\right)
	\end{equation*}
	with $\kappa \in \R$, $\mathcal{S}\in\R\cap(0,1)$, $C, A_{\lambda}\in\R$. Let $\lambda^{\ast}:=\max\{\lambda\in\mathcal{S}: A_{\lambda}\neq 0\}$ with $A_{\lambda^*} > 0$.
	If $a,b\gg1$, then
	\begin{equation*}
	c(a)c(b) > c(a+b).
	\end{equation*}
	\end{theorem}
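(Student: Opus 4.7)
The plan is to take logarithms and exploit the strict subadditivity of $x \mapsto x^{\lambda^*}$ for $\lambda^* \in (0,1)$, with all lower‑order terms treated as error. From the hypothesis one has
\[
\log c(n) = \log C - \kappa \log n + \sum_{\lambda \in \mathcal{S}} A_\lambda n^\lambda + o(1) \qquad (n \to \infty),
\]
so, setting $\Delta(a,b) := \log c(a) + \log c(b) - \log c(a+b)$, the task reduces to showing
\[
\Delta(a,b) = \log C - \kappa \log\frac{ab}{a+b} + \sum_{\lambda \in \mathcal{S}} A_\lambda\bigl(a^\lambda + b^\lambda - (a+b)^\lambda\bigr) + o(1) > 0
\]
whenever $a$ and $b$ are both sufficiently large. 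By symmetry I may assume $a \leq b$, and I will aim for bounds that depend only on $a$, so the conclusion holds uniformly in $b \geq a$.

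The heart of the argument is the estimate
\[
a^{\lambda^*} + b^{\lambda^*} - (a+b)^{\lambda^*} \geq (1-\lambda^*)\, a^{\lambda^*},
\]
which I would prove via the mean value theorem: there is $\xi \in (b,a+b)$ with $(a+b)^{\lambda^*} - b^{\lambda^*} = \lambda^* a \xi^{\lambda^*-1}$, and since $\lambda^* - 1 < 0$ one has $\xi^{\lambda^*-1} \leq b^{\lambda^*-1} \leq a^{\lambda^*-1}$, hence $(a+b)^{\lambda^*} - b^{\lambda^*} \leq \lambda^* a^{\lambda^*}$. Using $A_{\lambda^*} > 0$, this contributes at least $(1-\lambda^*) A_{\lambda^*} a^{\lambda^*}$ to $\Delta(a,b)$.

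For the remaining terms, the elementary inequalities $(a+b)^\lambda \geq b^\lambda$ and $(a+b)^\lambda \leq a^\lambda + b^\lambda$ (subadditivity for $\lambda \in (0,1)$) yield $0 \leq a^\lambda + b^\lambda - (a+b)^\lambda \leq a^\lambda$ for each $\lambda \in \mathcal{S} \setminus \{\lambda^*\}$, so these contributions are $o(a^{\lambda^*})$ as $a \to \infty$ because $\lambda < \lambda^*$; similarly $a/2 \leq ab/(a+b) \leq a$ gives $-\kappa \log(ab/(a+b)) = O(\log a)$, also negligible. Combining everything,
\[
\Delta(a,b) \geq (1-\lambda^*) A_{\lambda^*} a^{\lambda^*} + o(a^{\lambda^*}) > 0
\]
for $a$ sufficiently large, uniformly in $b \geq a$. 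The main technical care is keeping the estimate uniform in the potentially much larger $b$, but since every error bound has been written purely in terms of $a$ after the reduction $a \leq b$, uniformity is automatic and no further work is needed.
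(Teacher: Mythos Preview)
Your proof is correct. Both your argument and the paper's hinge on the strict subadditivity of $x\mapsto x^{\lambda}$ for $\lambda\in(0,1)$, but the packaging differs. The paper works with $c(a)c(b)-c(a+b)$ directly, writes $a=xb$ with $x\ge 1$, and studies the auxiliary function $f_\lambda(x)=(1+x)^\lambda-x^\lambda-1$, showing it lies in $[-1,0)$ on $[1,\infty)$ so that the exponential correction factor is driven to zero by the dominant term $A_{\lambda^*}b^{\lambda^*}f_{\lambda^*}(x)$. You instead pass to logarithms and use the mean value theorem to extract the explicit lower bound $a^{\lambda^*}+b^{\lambda^*}-(a+b)^{\lambda^*}\ge(1-\lambda^*)\min(a,b)^{\lambda^*}$, which makes the uniformity in the larger variable transparent and yields a quantitative growth rate $\Delta(a,b)\gtrsim \min(a,b)^{\lambda^*}$ rather than merely $\Delta(a,b)\to\infty$. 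One small caveat: taking $\log c(n)$ tacitly assumes $c(n)>0$ eventually, i.e., $C>0$; the case $C<0$ is trivial since then $c(a)c(b)>0>c(a+b)$ for large $a,b$, but it deserves a sentence.
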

Again, we can apply this to the sequences $N_\ell(n)$.
\begin{corollary}
	Let $\ell \geq 2$. We have, for $a,b\gg1$
	\begin{equation*}
	N_{\ell}(a)N_{\ell}(b) > N_{\ell}(a+b).
	\end{equation*}
\end{corollary}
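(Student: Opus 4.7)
The plan is to apply \Cref{thm:BO} directly with $c(n) = N_\ell(n)$, verifying its hypotheses via the asymptotic formulas for $N_\ell(n)$ established earlier in the paper. The verification splits into three regimes according to the value of $\ell$.

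For $\ell \geq 6$ the input will be \Cref{thm:Main-Nl}. The polynomial correction factor $1 + \sum_{j\geq 1} B_{\ell,j}n^{-j/\ell}$ converges to $1$ and can be absorbed into the $\sim$ relation, while the constant term $A_{\ell,\ell}$ in the exponent may be absorbed into the prefactor. After this bookkeeping one reads off $\mathcal{S} = \{(\ell-k)/\ell : 1 \leq k \leq \ell-1\} \subset \mathbb{Q}^+\cap(0,1)$, dominant exponent $\lambda^* = (\ell-1)/\ell$, and leading coefficient $A_{\lambda^*} = \ell\,\Gamma(\ell)^{1/\ell}Z_\ell/(\ell-1) > 0$; hence the hypotheses of \Cref{thm:BO} are satisfied.

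For $\ell \in \{3,4,5\}$ I will appeal to \Cref{thm:N3} and \Cref{thm:N345}, referenced in the remark following \Cref{thm:Main-Nl}. These provide asymptotics of the same qualitative shape, in particular with dominant exponent $(\ell-1)/\ell$ and positive leading coefficient, so the same verification applies verbatim. For $\ell = 2$ one has $N_2(n) = p(n)$, and the classical Hardy--Ramanujan asymptotic $p(n) \sim \frac{1}{4n\sqrt{3}}\exp(\pi\sqrt{2n/3})$ meets the hypothesis of \Cref{thm:BO} with $\lambda^* = 1/2$ and $A_{\lambda^*} = \pi\sqrt{2/3} > 0$.

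The corollary then follows immediately from \Cref{thm:BO}. There is no substantive obstacle here: the statement is essentially a direct instantiation of the general theorem, and the only step requiring a moment of care is recognizing that the subleading polynomial correction in \Cref{thm:Main-Nl}, converging to $1$, does not disturb the $\sim$ relation required by \Cref{thm:BO}.
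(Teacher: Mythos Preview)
Your proposal is correct and matches the paper's approach: the corollary is stated immediately after \Cref{thm:BO} as a direct application to $N_\ell(n)$, with the needed asymptotics supplied by \Cref{thm:Main-Nl}, \Cref{thm:N3}, \Cref{thm:N345}, and the classical Hardy--Ramanujan formula for $\ell=2$. Your explicit observation that the asymptotic correction factor $1+\sum_{j\ge1}B_{\ell,j}n^{-j/\ell}$ (an asymptotic expansion, so truncation at $j=0$ leaves a $1+O(n^{-1/\ell})$ factor) and the constant $A_{\ell,\ell}$ can be absorbed into the $\sim$ relation and the prefactor, respectively, is exactly the small bookkeeping the paper leaves implicit.
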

The paper is organized as follows. In Section 2, we recall known results. In Section 3 we prove \Cref{thm:Main-Nl}. In Section \ref{S:Log}, we show our main results concerning log-concavity and give some examples. In Section \ref{sect:BO} we provide a proof of \Cref{thm:BO}. In Section 6, we state some open questions.
\section*{Acknowledgments}
The first  author received funding from the European Research Council (ERC) under the European Union’s Horizon 2020 research and innovation programme (grant agreement No. 101001179).
\section{Preliminaries}
\subsection{Results from \cite{BBBF23}}
We require some results from \cite{BBBF23}. Let $f:\N\to\N_0$, set $\LL:=\N\setminus f^{-1}(\{0\})$, and for $q=e^{-z}$ ($z\in\C$ with $\re(z) > 0$), define
\begin{align*} 
	G_f(z):=\sum_{n\ge0} p_f(n)q^n := \prod_{n\ge1} \frac{1}{\left(1-q^n\right)^{f(n)}},\qquad L_f(s) := \sum_{n\ge1} \frac{f(n)}{n^s}.
\end{align*}
We let $\mathcal P$ be the set of poles of $L^*_f(s) := \GG(s)\z(s+1)L_f(s)$, and for $R>0$ we denote by $\mathcal P_R$ 
the union of the poles of $L_f^*$ greater than $-R$ with $\{0\}$. We require the following key properties of these objects:
\begin{enumerate}[leftmargin=*,label=\rm{(P\arabic*)}]
	\item\label{paper:main:1} All poles of $L_f$ are real. Let $\a>0$ be the largest pole of $L_f$. There exists $L\in\N$, such that for all primes $p$, we have $|\LL\setminus (p\N\cap\LL)|\ge L>\frac\a2$.
	\item\label{paper:main:3} Condition \ref{paper:main:3} is attached to $R\in\R^+$. The series $L_f(s)$ converges for some $s\in\C$, has a meromorphic continuation to $\{s\in\C:\re(s)\ge-R\}$, and is holomorphic on the line $\{s\in\C:\re(s)=-R\}$. The function $L_f^*(s)=\GG(s)\z(s+1)L_f(s)$ has only real poles $0<\a:=\g_1>\g_2>\dots$ that are all simple, except the possible pole at $s=0$, that may be double. 
	\item\label{paper:main:4} For some $a<\frac\pi2$, in every strip $\s_1\le\s\le\s_2$ in the domain of holomorphicity of $L_f(s)$, we uniformly have, for $s=\s+it$,
	\begin{align*}
		L_f(s) = O_{\sigma_1, \sigma_2}\left(e^{a|t|}\right), \qquad |t| \to \infty.
	\end{align*}
\end{enumerate}
We have the following asymptotic behavior of $p_f(n)$.
\begin{theorem}[\cite{BBBF23}, Theorem 1.4] \label{paper:BBBF:T:main2}
	Assume \ref{paper:main:1} for $L\in\N$, \ref{paper:main:3} for $R>0$, and \ref{paper:main:4}. Then, for some $M,N\in\N$, we have
	\begin{equation*}
		p_f(n) = \frac{C}{n^b}
		\exp\left(A_1n^\frac{\a}{\a+1}+\sum_{j=2}^M A_jn^{\a_j}\right)\left(1+\sum\limits_{j=2}^N \frac{B_j}{n^{\b_j}} + O_{L,R}\left(n^{-\min\left\{\frac{2L-\a}{2(\a+1)},\frac{R}{\a+1}\right\}}\right)\right).
	\end{equation*}
	Here, $0\le\a_M<\a_{M-1}<\cdots\a_2<\a_1=\frac{\a}{\a+1}$ are given by $\CL$ 
	\begin{align*}
		\CL := \frac{1}{\a+1}\mathcal P_R + \sum_{\mu\in\mathcal P_R} \left(\frac{\mu+1}{\a+1}-1\right)\N_0.
	\end{align*}
	The exponents $0<\b_2<\b_3<\dots$ are given by $\CM+\CN$, where $\CM$ and $\CN$ are defined by 
	\begin{align*}
		\CM &:= \left\{ \theta = \frac{\a}{\a+1}n + \sum_{\mu\in\mathcal P_R} \left(1 - \frac{\mu+1}{\a+1}\right)n_\mu  \colon n, n_\mu \in \N_0, \theta \in \left[0,\frac{R+\a}{\a+1}\right) \right\}, \\
		\CN &:= \left\{\sum_{j=1}^K b_j\th_j : b_j,K\in\N_0,\th_j\in(-\CL)\cap\left(0,\frac{R}{\a+1}\right)\right\}.
	\end{align*}
	The coefficients $A_j$ and $B_j$ can be calculated explicitly; $A_1$, $C$, and $b$ are given by 
	\begin{align*}
		A_1 &:= \left(1+\frac1\a\right)(\w_\a\GG(\a+1)\z(\a+1))^\frac{1}{\a+1},\qquad C := \frac{e^{L_f'(0)}(\w_\a\GG(\a+1)\z(\a+1))^\frac{\frac12-L_f(0)}{\a+1}}{\sqrt{2\pi(\a+1)}},\\
		b &:= \frac{1-L_f(0)+\frac\a2}{\a+1},
	\end{align*}
	where $\omega_\nu := \mathrm{Res}_{s=\nu} L_f(s)$. Moreover, if $\a$ is the only positive pole of $L_f$, then we have $M=1$.
\end{theorem}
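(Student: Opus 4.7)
The plan is to invoke \Cref{thm:BO} with $c(n) = N_\ell(n)$. The only task is to verify that, for each $\ell \geq 2$, the known asymptotic for $N_\ell(n)$ can be written in the form
\[
N_\ell(n) \sim \frac{C}{n^\kappa} \exp\left(\sum_{\lambda \in \mathcal S} A_\lambda n^\lambda\right),
\]
with $\mathcal S \subset (0,1)$ finite and $A_{\lambda^\ast} > 0$ at the largest active exponent $\lambda^\ast := \max\{\lambda \in \mathcal S : A_\lambda \neq 0\}$.

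For $\ell \geq 6$, \Cref{thm:Main-Nl} supplies an asymptotic expansion of exactly this shape. One reads off $\kappa = \tfrac{\ell+1}{2\ell}$ and
\[
\mathcal S \subseteq \left\{\tfrac{\ell-1}{\ell}, \tfrac{\ell-2}{\ell}, \dots, \tfrac{1}{\ell}\right\} \subset (0,1).
\]
The index $k=\ell$ in the sum $\sum_{k=2}^{\ell} A_{\ell,k} n^{(\ell-k)/\ell}$ contributes only a constant $A_{\ell,\ell} n^0$, which we absorb into the leading multiplicative constant $C$; the trailing factor $1 + \sum_{j\geq 1} B_{\ell,j} n^{-j/\ell}$ tends to $1$, so the required leading-order $\sim$ equivalence is genuine. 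The largest active exponent is $\lambda^\ast = \tfrac{\ell-1}{\ell}$, with coefficient
\[
A_{\lambda^\ast} = \frac{\ell\, \Gamma(\ell)^{1/\ell} Z_\ell}{\ell-1} > 0,
\]
since $\Gamma(\ell) > 0$ and $Z_\ell = (\zeta(2)\zeta(3)\cdots\zeta(\ell))^{1/\ell}$ is a positive real number.

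The cases $\ell \in \{3,4,5\}$ are handled identically using the asymptotic formulas announced in the remark after \Cref{thm:Main-Nl} (i.e.\ \Cref{thm:N3} and \Cref{thm:N345}), whose structure is the same and whose dominant coefficient is again manifestly positive. The case $\ell = 2$ reduces to $N_2(n) = p(n)$, for which the classical Hardy--Ramanujan asymptotic $p(n) \sim \tfrac{1}{4n\sqrt{3}} \exp(\pi\sqrt{2n/3})$ fits the template with $\lambda^\ast = \tfrac12$ and positive coefficient $\pi\sqrt{2/3}$. In every case \Cref{thm:BO} applies and delivers $N_\ell(a) N_\ell(b) > N_\ell(a+b)$ for $a, b \gg 1$. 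The corollary is thus a matter of pattern-matching against the hypotheses of \Cref{thm:BO}; no essential obstacle arises beyond checking the sign of the leading constant in each asymptotic, which is immediate.
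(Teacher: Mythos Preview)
Your proposal addresses the wrong statement. The theorem in question is \Cref{paper:BBBF:T:main2}, which is Theorem 1.4 of \cite{BBBF23}: a general asymptotic expansion for $p_f(n)$ under hypotheses \ref{paper:main:1}--\ref{paper:main:4}. This result is quoted from \cite{BBBF23} as a preliminary and is not proved in the present paper at all; there is no ``paper's own proof'' to compare against.

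What you have actually written is a proof of the unnamed corollary following \Cref{thm:BO}, namely that $N_\ell(a)N_\ell(b) > N_\ell(a+b)$ for $a,b \gg 1$. For that corollary your argument is essentially correct and matches what the paper does implicitly: one checks that the asymptotics from \Cref{thm:Main-Nl}, \Cref{thm:N3}, \Cref{thm:N345}, and the classical Hardy--Ramanujan formula fit the template of \Cref{thm:BO} with $A_{\lambda^*}>0$. But none of this bears on \Cref{paper:BBBF:T:main2}, whose proof requires the full saddle-point machinery developed in \cite{BBBF23} (minor-arc estimates, the saddle-point expansion of $\varrho_n$, and the analysis of $e^{n\varrho_n}G_f(\varrho_n)$), and which is simply imported here without argument.
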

The situation of exactly two positive poles was worked out explicitly in Theorem 4.4 of \cite{BBBF23}. 
\begin{theorem}\label{paper:BBBF:T:TwoPoleAsymptotics} 
	Assume that $f:\N\to\N_0$ satisfies the conditions of \Cref{paper:BBBF:T:main2} and that $L_f$ has exactly two positive poles $\a>\b$, such that $\frac{\lambda+1}{\lambda}\b<\a\le\frac{\lambda}{\lambda-1}\b$ for some $\lambda\in\N$. Then 
	\begin{multline*}
	p_f(n) = \frac{C}{n^b}\exp\left(A_1n^\frac{\a}{\a+1}+A_2n^\frac{\b}{\a+1}+\sum_{k=3}^{\lambda+1} A_kn^{\frac{(k-1)\b}{\a+1}+\frac{k-2}{\a+1}+2-k}\right)\\
	\times \left(1+\sum_{j=2}^{N} \frac{B_j}{n^{\nu_j}} + O_{L,R}\left(n^{-\min\left\{\frac{2L-\a}{2(\a+1)},\frac{R}{\a+1}\right\}}\right)\right),\qquad (n\to\infty),
	\end{multline*}
	with
	\begin{equation*}
	A_1 := (\w_{\a}\GG(\a+1)\z(\a+1))^\frac{1}{\a+1}\left(1+\frac1\a\right),\qquad A_2 := \frac{\w_{\b}\GG(\b)\z(\b+1)}{(\w_\a\GG(\a+1)\z(\a+1))^\frac{\b}{\a+1}},
	\end{equation*}
	and for $k \geq 3$
	\begin{multline*}
	A_k := K_k + \frac{c_1^\frac{1}{\a+1}}{\a}\sum_{m=1}^\lambda \binom{-\a}{m}\sum_{\substack{0\le j_1,\dots,j_\lambda\le m\\j_1+\ldots+j_\lambda=m\\j_1+2j_2+\ldots+\lambda j_\lambda =k-1}} \binom{m}{j_1,j_2,\dots,j_\lambda}\frac{K_2^{j_1}\cdots K_{\lambda+1}^{j_\lambda}}{c_1^\frac{m}{a+1}}\\
	+ \frac{c_2}{\b c_1^\frac{\b}{a+1}}\sum_{m=1}^\lambda \binom{-\b}{m}\sum_{\substack{0\le j_1,\dots,j_\lambda\le m\\j_1+\ldots+j_\lambda=m\\j_1+2j_2+\ldots+\lambda j_\lambda=k-2}} \binom{m}{j_1,j_2,\dots,j_\lambda}\frac{K_2^{j_1}\cdots K_{\lambda+1}^{j_\lambda}}{c_1^\frac{m}{a+1}}.
	\end{multline*}
	Here, ${m \choose m_1, m_2, \ldots, m_k} := \frac{m!}{m_1! m_2! \cdots m_k!}$ with $\sum_{j=1}^k m_j = m$ denotes the multinomial coefficient. The $\nu_j$ run through $\CM+\CN$, the $K_j$ are given in Lemma 4.3 of \cite{BBBF23}, and $c_1$, $c_2$, and $c_3$ are defined by
		\begin{align*} 
		c_1 := \omega_{\a} \Gamma(\a+1)\z(\a+1), \quad c_2 := \omega_{\b} \Gamma(\b+1) \z(\b+1), \quad c_3 := L_f(0).
		\end{align*}
\end{theorem}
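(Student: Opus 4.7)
The plan is to use the Mellin--saddle-point method of Meinardus, refined to handle the second positive pole $s=\beta$ alongside $s=\alpha$.

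First, I would derive a precise expansion of $\log G_f(z)$ as $z\to 0^+$ within a sector. Starting from $\log G_f(z) = \sum_{n,k\ge 1} f(n)\, e^{-nkz}/k$ and invoking the Mellin transform of the exponential, one obtains
\begin{equation*}
\log G_f(z) = \frac{1}{2\pi i} \int_{(c)} L_f^*(s)\, z^{-s}\, ds
\end{equation*}
for any $c$ larger than $\alpha$. Shifting the contour to $\re(s) = -R$ and collecting residues --- simple at each $\gamma\in\mathcal P\cap(-R,\alpha]$, and possibly double at $s=0$ --- produces
\begin{equation*}
\log G_f(z) = c_1 z^{-\alpha} + c_2 z^{-\beta} + \sum_{\gamma\in\mathcal P_R\setminus\{\alpha,\beta\}} d_\gamma\, z^{-\gamma} + \left(\tfrac{1}{2} - c_3\right)\log z + L_f'(0) + O(z^R),
\end{equation*}
with the remainder integral on the shifted line bounded via the uniform estimate \ref{paper:main:4}.

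Next, I would represent $p_f(n)$ by Cauchy's formula as $p_f(n) = \frac{1}{2\pi i} \int_{(c)} G_f(z)\, e^{nz}\, dz$ and apply saddle-point analysis to $F(z) := \log G_f(z) + nz$. The equation $F'(z_0)=0$ reads $n = \alpha c_1 z_0^{-\alpha-1} + \beta c_2 z_0^{-\beta-1} + \cdots,$ with leading solution $z_0 \sim (\alpha c_1/n)^{1/(\alpha+1)}$. The key step is to bootstrap this implicit relation into an asymptotic series $z_0 = \sum_{k\ge 1} K_k\, n^{-\theta_k}$ by iterated back-substitution; the hypothesis $\frac{\lambda+1}{\lambda}\beta < \alpha \le \frac{\lambda}{\lambda-1}\beta$ is precisely what isolates the first $\lambda$ corrections as those contributing above the target error $n^{-R/(\alpha+1)}$.

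Finally, substituting $z_0$ back into $F$ recovers the exponent. The leading terms $A_1 n^{\alpha/(\alpha+1)}$ and $A_2 n^{\beta/(\alpha+1)}$ come from the $\alpha$- and $\beta$-contributions at the leading saddle. The $A_k$ for $k\ge 3$ arise from the generalized binomial expansions of $c_1 z_0^{-\alpha}$ and $c_2 z_0^{-\beta}$ after inserting the full expansion of $z_0$: this is exactly the multinomial sum displayed in the statement, where the index $j_\ell$ counts the number of times the correction $K_{\ell+1}$ appears. Gaussian integration around $z_0$, combined with expansion of the non-quadratic part of $F$, then produces the prefactor $C/n^b$ and the refinement series $1 + \sum_j B_j/n^{\nu_j}$. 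The main obstacle is bookkeeping: tracking all cross-contributions from the two poles up to the uniform error, and verifying that the truncation at $\lambda$ is consistent with the stated error term. The binomial/multinomial structure of $A_k$ is exactly the combinatorial residue of this bookkeeping.
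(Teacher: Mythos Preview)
This theorem is not proved in the present paper; it is quoted verbatim as Theorem~4.4 of \cite{BBBF23} in the preliminaries section, so there is no in-paper proof to compare against directly.

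That said, your outline is the correct Meinardus-type strategy and is essentially what \cite{BBBF23} carries out. The paper itself executes the analogous \emph{three}-pole computation in the proof of \Cref{thm:Main-Nl} (via \Cref{lem:rho-expand}), and the method there matches yours step for step, with one methodological refinement worth noting: rather than ``iterated back-substitution'' to obtain the saddle expansion $\varrho_n \sim \sum_k K_k n^{-\theta_k}$, the paper rewrites the saddle equation as a polynomial curve in the variables $x = n^{-1/(\alpha+1)}$ and $z = x/\varrho_n$, and then applies Lagrange inversion (\Cref{lem:power-series-inverse}) to extract the $K_k$ systematically. This is cleaner than bootstrapping and yields the closed multinomial formulas for $A_k$ directly.

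One small slip in your expansion of $\log G_f(z)$: the coefficient of $\log z$ coming from the pole of $L_f^*$ at $s=0$ is $-L_f(0) = -c_3$, not $\tfrac12 - c_3$; compare \eqref{eq:Phi}. The extra $\tfrac12$ you may be anticipating enters only later, through the Gaussian prefactor $(2\pi \Phi_f''(\varrho_n))^{-1/2}$, not through the Mellin residue.
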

\begin{remark*}
By Lemma 4.3 of \cite{BBBF23}, the first values of $K_j$ are given by
\begin{align*}
	K_1 &= c_1^\frac{1}{\a+1},
	\hspace{3mm} K_2 = \frac{c_2}{(\a+1)c_1^\frac{\b}{\a+1}},
	\hspace{3mm} K_3 = \frac{c_2^2(\a-2\b)}{2(\a+1)^2c_1^\frac{2\b+1}{\a+1}}, \\
	K_4 &= \frac{c_2^3\left(2\a^2-9\a\b-2\a+9\b^2+3\b\right)}{6(\a+1)^3c_1^\frac{3\b+2}{\a+1}}, \\
	K_5 &= \frac{c_2^4(6 \a^3-44 \a^2 \b-15 \a^2+96 \a \b^2+56 \a \b+6 \a-64 \b^3-48 \b^2-8 \b)}{24(\a+1)^4 c_1^{\frac{4\b + 3}{\a + 1}}}.
\end{align*}
\end{remark*}
\begin{remark*}
	As the number of positive poles increases, the situation quickly becomes much more complicated. The focus of this paper is, in particular, the case of three poles.
\end{remark*}
We also require the behavior of a certain saddle point function. We adopt the notation for the coefficients of asymptotic expansions from \cite{BBBF23} and write for a sequence $g(n)$ 
\begin{align*}
g(n) = \sum_{j = 1}^{N} \frac{a_{g,j}}{n^{\nu_{j}}} + O_R\left(n^{-R}\right), \quad (\nu_1 < \nu_2 < \cdots < \nu_R < R).
\end{align*}
\begin{proposition}[Corollary 3.4 of \cite{BBBF23}] \label{prop:BBBF:paper:cor:rhosaddleexp1}
	Let $\Phi_f := \mathrm{Log}(G_f)$ and assume that $f\colon\N\to\N_0$ satisfies the conditions of \Cref{paper:BBBF:T:main2}. Let $\p_n > 0$ solve\footnote{We go back and forth between functions and sequences in our notations here. For more details see Subsection 2.2 in \cite{BBBF23}.}  
	\begin{align*} 
	-\Phi'_f(\varrho) = n.
	\end{align*}
	Then 
	\begin{align*}
	\p_{n} = \sum_{1 \leq j \leq N_\varrho} \frac{a_{\varrho,j}}{n^{\nu_{\varrho,j}}} + O\left(n^{-\frac{R}{\a+1}-1}\right) 
	\end{align*}
	with $a_{\p,1}=a_{-\Phi_f',1}^\frac{1}{\a+1} = (\w_\a\GG(\a+1)\z(\a+1))^{\frac{1}{\a+1}}$ and we have 
	\begin{align*}
	\{\nu_{\p,j}\colon 1\leq j\leq N_\p\} =\left(\frac{1}{\a+1} - \sum_{\substack{\mu\in \textcolor{black}{\mathcal{P}_R}}}\left( \frac{\mu+1}{\a+1} - 1 \right) \N_0\right)\cap \left[\frac{1}{\a + 1},\frac{R}{\a + 1} + 1\right).
	\end{align*}
\end{proposition}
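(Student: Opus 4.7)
The plan is to first obtain a full asymptotic expansion of $-\Phi_f'(z)$ as $z\to 0^+$ via a Mellin transform, and then invert it order by order to solve the saddle point equation $-\Phi_f'(\p_n)=n$ in decreasing powers of $n^{-\frac{1}{\a+1}}$.

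For the first step, I would expand $\Phi_f(z)=-\sum_{n\ge 1}f(n)\log(1-e^{-nz})$ and use the Mellin--Barnes identity $-\log(1-e^{-z})=\frac{1}{2\pi i}\int_{(c)}\GG(s)\z(s+1)z^{-s}\,ds$, valid for $c$ sufficiently large. Interchanging sum and integral (justified by \ref{paper:main:4}) gives $\Phi_f(z)=\frac{1}{2\pi i}\int_{(c)}L_f^*(s)z^{-s}\,ds$. Shifting the contour to $\re(s)=-R$ and collecting residues at each $\mu\in\mathcal P_R$ produces a finite sum of terms proportional to $z^{-\mu}$ (with a possible $\log(z)$ factor when $s=0$ is a double pole), plus a remainder controlled by $O(z^R)$ using \ref{paper:main:4} and the holomorphicity on $\re(s)=-R$ from \ref{paper:main:3}. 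Differentiating term by term yields the expansion of $-\Phi_f'(z)$, with leading contribution $\w_\a\GG(\a+1)\z(\a+1)z^{-\a-1}$ and remainder $O(z^{R-1})$.

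Next, I would invert this expansion. The leading balance $\w_\a\GG(\a+1)\z(\a+1)\p_n^{-\a-1}\sim n$ immediately gives $a_{\p,1}=(\w_\a\GG(\a+1)\z(\a+1))^{\frac{1}{\a+1}}$. For the full series, substituting the ansatz $\p_n=\sum_j a_{\p,j}n^{-\nu_{\p,j}}$ into the expansion of $-\Phi_f'$ and matching powers of $n$ determines the $a_{\p,j}$ recursively. The correction exponents arise from the powers $n^{\frac{\mu+1}{\a+1}-1}$ contributed by each subleading pole $\mu\in\mathcal P_R$; since $\mu\le\a$, we have $\frac{\mu+1}{\a+1}-1\le 0$, so these are genuine subleading corrections. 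Taking arbitrary nonnegative integer combinations produces precisely the claimed exponent set $\frac{1}{\a+1}-\sum_{\mu\in\mathcal P_R}(\frac{\mu+1}{\a+1}-1)\N_0$, truncated to $[\frac{1}{\a+1},\frac{R}{\a+1}+1)$ by the tail error. Uniqueness of the positive solution $\p_n$ is automatic from the strict monotonicity of $z\mapsto -\Phi_f'(z)$ on $(0,\infty)$.

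The main obstacle is tracking the tail error through the nonlinear inversion. Since $\p_n\sim n^{-1/(\a+1)}$, the $O(z^{R-1})$ remainder in $-\Phi_f'$ becomes an error of order $n^{-(R-1)/(\a+1)}$ in the saddle equation, and dividing by the derivative of $-\Phi_f'$ at the leading saddle (of size $n^{(\a+2)/(\a+1)}$) yields the advertised error of order $n^{-R/(\a+1)-1}$ for $\p_n$. Verifying rigorously that no exponents outside the stated set can emerge from the iterated substitution, and that the error remains uniformly controlled at each stage, is the main bookkeeping task; this follows the standard procedure developed in \cite{BBBF23}.
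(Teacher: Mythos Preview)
This proposition is not proved in the present paper but quoted from \cite{BBBF23}; your sketch is precisely the approach taken there, namely the Mellin--Barnes expansion of $\Phi_f$ (cited here as Lemma~3.2 of \cite{BBBF23}, see \eqref{eq:Phi}--\eqref{eq:Phi-k}) followed by a formal series inversion (Proposition~3.3 of \cite{BBBF23}, as invoked in the proof of \Cref{lem:rho-expand}). Your heuristics for the leading coefficient, the generation of the exponent set via products of the relative corrections $n^{\frac{\mu+1}{\a+1}-1}$, and the propagation of the $O(z^{R-1})$ tail through the implicit-function step are all correct.
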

 The following is required for a more explicit investigation of the involved constants.
\begin{lemma}[Lemma 4.1 of \cite{BBBF23}]\label{lem:BBBF:paper:L:mainasy}
	Let $f:\N\to\N_0$ satisfy the conditions of \Cref{paper:BBBF:T:main2}. Then
	\begin{align*}
	p_f(n) = \frac{e^{n\p_{n}}G_f(\p_n)}{\sqrt{2\pi}}\left(\sum\limits_{j=1}^M \frac{d_j }{n^{\nu_j}} + O_{L,R}\left(n^{-\min\left\{\frac{L+1}{\a+1}, \frac{R+\a}{\a+1}+\frac{\a+2}{2(\a+1)}\right\}}\right)\right)
	\end{align*}
	for some $M\in\N$, $d_1=\frac{1}{\sqrt{\a+1}}(\w_\a\GG(\a+1)\z(\a+1))^\frac{1}{2(\a+1)}$, and $\nu_j$ runs through
	\[
	\frac{\a+2}{2(\a+1)} + \frac{\a}{\a+1}\N_0 + \left(-\sum_{\mu\in\mathcal P_R} \left(\frac{\mu+1}{\a+1}-1\right)\N_0\right) \cap \left[0,\frac{R+\a}{\a+1}\right).
	\]
	In particular, we have $\nu_1 =\tfrac{\a+2}{2(\a+1)}$.
\end{lemma}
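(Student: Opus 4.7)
The plan is to carry out a saddle point analysis starting from Cauchy's formula
\[
p_f(n) = \frac{1}{2\pi i}\int_{\varrho - i\pi}^{\varrho + i\pi} G_f(z)\, e^{nz}\,\ddd z,
\]
with the contour chosen to pass through the saddle $\varrho = \varrho_n$ given by \Cref{prop:BBBF:paper:cor:rhosaddleexp1}, i.e.\ the positive solution of $-\Phi_f'(\varrho) = n$. After the substitution $z = \varrho_n + it$ the integrand becomes $G_f(\varrho_n) e^{n\varrho_n} \exp(H_n(t))$ with $H_n(t) := \Phi_f(\varrho_n + it) - \Phi_f(\varrho_n) + int$, and the saddle condition is exactly $H_n(0) = H_n'(0) = 0$.

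Next I would split into a major arc $|t| \le T_n$, with $T_n$ chosen so that $T_n^2 \Phi_f''(\varrho_n)\to\infty$ while $T_n^3\Phi_f'''(\varrho_n)$ stays controlled (a choice like $T_n = \varrho_n^{1+\delta}$ works since $\varrho_n \to 0$), and a minor arc $T_n < |t| \le \pi$. On the major arc, I Taylor expand
\[
H_n(t) = -\tfrac{1}{2}\Phi_f''(\varrho_n)\, t^2 + \sum_{k \ge 3}\frac{(it)^k}{k!}\Phi_f^{(k)}(\varrho_n),
\]
pull out the Gaussian $e^{-\Phi_f''(\varrho_n)t^2/2}$, expand the exponential of the remaining cubic-and-higher part as a power series in $t$, and integrate term by term. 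This produces the main factor $\sqrt{2\pi/\Phi_f''(\varrho_n)}$ times an asymptotic series whose coefficients are polynomials in the $\Phi_f^{(k)}(\varrho_n)$. Each such derivative is evaluated by inserting the Mellin--Barnes representation
\[
\Phi_f(z) = \frac{1}{2\pi i}\int_{(c)} L_f^*(s)\, z^{-s}\,\ddd s
\]
and shifting the contour to $\re(s) = -R$, picking up one contribution $\Res_{s=\mu}$ for every $\mu \in \mathcal P_R$ with the remainder bounded via property \ref{paper:main:4}. Combined with the expansion of $\varrho_n$ from \Cref{prop:BBBF:paper:cor:rhosaddleexp1}, each $\Phi_f^{(k)}(\varrho_n)$ becomes an asymptotic series in $n^{-1/(\a+1)}$. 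In particular $\Phi_f''(\varrho_n) \sim \w_\a \GG(\a+2)\z(\a+1)\,\varrho_n^{-\a-2} = (\a+1)\w_\a\GG(\a+1)\z(\a+1)\,\varrho_n^{-\a-2}$, and inserting $\varrho_n \sim (\w_\a\GG(\a+1)\z(\a+1))^{1/(\a+1)} n^{-1/(\a+1)}$ produces the stated value of $d_1 = \frac{1}{\sqrt{\a+1}}(\w_\a\GG(\a+1)\z(\a+1))^{\frac{1}{2(\a+1)}}$ together with the leading exponent $\nu_1 = \frac{\a+2}{2(\a+1)}$.

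The main obstacle is the minor arc estimate: I need $|G_f(\varrho_n + it)|/G_f(\varrho_n)$ to be smaller than any prescribed negative power of $n$ once $|t| > T_n$. The standard trick is to rewrite
\[
\log\frac{G_f(\varrho_n)}{|G_f(\varrho_n + it)|} = \sum_{m \in \LL} f(m) \log \frac{|1 - e^{-m(\varrho_n + it)}|}{1 - e^{-m\varrho_n}},
\]
drop all but those $m \in \LL$ coprime to a carefully chosen prime $p \asymp 1/|t|$, and observe that on those indices $|1 - e^{-m(\varrho_n + it)}|$ is bounded below by a positive multiple of $m\varrho_n$ through a Diophantine/density argument. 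Property \ref{paper:main:1}, guaranteeing $|\LL \setminus (p\N\cap \LL)| \ge L > \tfrac{\a}{2}$ for \emph{every} prime, is precisely the quantitative input that transforms this lower bound into a saving of order $\varrho_n^{-L}$ against the main term, beating any required polynomial loss in $n$. Finally, the exponent lattice in the statement arises from combining the integer shifts $\tfrac{\a}{\a+1}\N_0$ (from the Gaussian moment expansion and powers of $\Phi_f''(\varrho_n)^{-1/2}$) with the shifts $-\sum_{\mu \in \mathcal P_R}(\tfrac{\mu+1}{\a+1}-1)\N_0$ (from contour shifts in the Mellin representation and from the $\varrho_n$ expansion), based at $\tfrac{\a+2}{2(\a+1)}$. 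The most delicate bookkeeping is to check that the three truncation errors, from contour shifting, from Taylor truncation of $H_n$, and from the minor arc, all combine into the single uniform remainder $O(n^{-\min\{(L+1)/(\a+1),\ (R+\a)/(\a+1) + (\a+2)/(2(\a+1))\}})$ stated in the lemma.
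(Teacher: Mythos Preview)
The paper does not prove this lemma; it is quoted verbatim as Lemma~4.1 of \cite{BBBF23} in the preliminaries section, so there is no in-paper proof to compare against. Your outline is the standard saddle point argument and is almost certainly the route taken in \cite{BBBF23}: Cauchy's formula, saddle at $\varrho_n$, major/minor arc split, Gaussian expansion on the major arc via the Mellin representation of $\Phi_f$, and a minor arc bound driven by condition \ref{paper:main:1}. Your derivation of $d_1$ and $\nu_1$ from $\Phi_f''(\varrho_n)\sim(\a+1)\w_\a\GG(\a+1)\z(\a+1)\varrho_n^{-\a-2}$ together with \Cref{prop:BBBF:paper:cor:rhosaddleexp1} is correct.

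One point in your minor arc sketch deserves sharpening. Condition \ref{paper:main:1} does not say that $\LL\setminus(p\N\cap\LL)$ has at least $L$ elements in every residue class or in every window; it only gives a global count of at least $L$. The way this is actually used is that, for $|t|$ bounded away from $0$, one fixes the finitely many $m\in\LL\setminus(p\N\cap\LL)$ (with $p$ chosen so that $mt$ avoids $2\pi\Z$) and lower-bounds $\log\frac{|1-e^{-m(\varrho_n+it)}|}{1-e^{-m\varrho_n}}$ on each of those $L$ indices by a positive constant times $\log(1/\varrho_n)$, giving a total saving of order $\varrho_n^{L}$ relative to $G_f(\varrho_n)$. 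Combined with the crude bound $G_f(\varrho_n)\asymp e^{\Phi_f(\varrho_n)}$ this is what produces the exponent $\frac{L+1}{\a+1}$ in the error term, not $\varrho_n^{-L}$ against a polynomial as you wrote. The bookkeeping of the three truncation errors is otherwise as you describe.
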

\subsection{Results from complex analysis} We require the Lagrange inversion formula.
\begin{lemma}[Corollary 11.2 of \cite{Char}] \label{lem:power-series-inverse}
	Let $\phi:B_r(0)\to D$, where for $r \in \R^+$ as usual $B_r(0) := \{ z \in \C \colon |z| < r\}$, be a holomorphic function such that $\phi(0)=0$ and $\phi'(0)\ne0$, with $\phi(z)=:\sum_{n\ge1}a_nz^n$. Then $\phi$ is locally biholomorphic and its local inverse has a power series expansion $\phi^{-1}(w)=:\sum_{k\ge1}b_kw^k$, with
	\begin{equation*}
	b_k = \frac{1}{ka_1^{k}} \sum_{\substack{\ell_1,\ell_2, \ell_3 \dots \geq 0 \\ \ell_1 + 2\ell_2 + \dots = k-1}} (-1)^{\ell_1 + \ell_2 +\ell_3+\dots} \frac{k \cdots (k-1+\ell_1+\ell_2+\dots)}{\ell_1! \ell_2! \ell_3! \cdots}\left( \frac{a_2}{a_1}\right)^{\ell_1} \left( \frac{a_3}{a_1}\right)^{\ell_2} \cdots.
	\end{equation*}
\end{lemma}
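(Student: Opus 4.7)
The statement has two parts: the local biholomorphism, and the explicit series for the inverse. The first is immediate: since $\phi(0)=0$ and $\phi'(0)=a_1\neq 0$, the holomorphic inverse function theorem provides a holomorphic local inverse $\phi^{-1}$ on a neighborhood of $0$, with power series $\phi^{-1}(w)=\sum_{k\ge 1} b_k w^k$ (the constant term vanishes because $\phi^{-1}(0)=0$). So the work lies entirely in identifying the coefficients $b_k$.

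The plan is to go through the standard residue form of Lagrange inversion and then expand. Starting from $b_k=\frac{1}{2\pi i}\oint \phi^{-1}(w) w^{-k-1} dw$ on a small circle around $0$, the substitution $w=\phi(z)$ (valid by the biholomorphism) gives
\[
b_k=\frac{1}{2\pi i}\oint \frac{z\,\phi'(z)}{\phi(z)^{k+1}}\,dz.
\]
An integration by parts (or equivalently, the identity $\frac{d}{dz}\left(-\frac{1}{k\phi(z)^k}\right)=\frac{\phi'(z)}{\phi(z)^{k+1}}$) converts this into the familiar form
\[
b_k=\frac{1}{k}\,[z^{k-1}]\left(\frac{z}{\phi(z)}\right)^{k},
\]
where $[z^{k-1}]$ denotes coefficient extraction.

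Next I would expand the bracket. Writing $\phi(z)=a_1 z\bigl(1+u(z)\bigr)$ with $u(z):=\sum_{n\ge 2}\frac{a_n}{a_1}z^{n-1}$, we get
\[
\left(\frac{z}{\phi(z)}\right)^{k}=\frac{1}{a_1^{k}}(1+u(z))^{-k}=\frac{1}{a_1^{k}}\sum_{m\ge 0}\binom{-k}{m}u(z)^{m},
\]
where $\binom{-k}{m}=(-1)^m\frac{k(k+1)\cdots(k+m-1)}{m!}$. Applying the multinomial theorem to $u(z)^m$ and collecting the exponent of $z$ coming from a tuple $(\ell_1,\ell_2,\ldots)$ with $m=\ell_1+\ell_2+\cdots$ yields $z^{\ell_1+2\ell_2+\cdots}$; the condition for a contribution to $[z^{k-1}]$ is $\ell_1+2\ell_2+\cdots=k-1$. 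The multinomial factor $\frac{m!}{\ell_1!\,\ell_2!\cdots}$ cancels the $m!$ in $\binom{-k}{m}$, leaving exactly
\[
\frac{k(k+1)\cdots(k-1+\ell_1+\ell_2+\cdots)}{\ell_1!\,\ell_2!\cdots}(-1)^{\ell_1+\ell_2+\cdots}\prod_{j\ge 1}\left(\frac{a_{j+1}}{a_1}\right)^{\ell_j}.
\]
Dividing by $k$ yields exactly the claimed expression for $b_k$.

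The only real subtlety is careful index bookkeeping: one must verify that the index $j$ of $\ell_j$ in the multinomial expansion of $u(z)^m$ matches the index shift in $a_{j+1}/a_1$ in the claim (which corresponds to reading the stated formula as $(a_2/a_1)^{\ell_1}(a_3/a_1)^{\ell_2}\cdots$), and that the rising factorial $k(k+1)\cdots(k+m-1)$ coming from $\binom{-k}{m}$ matches the product $k\cdots(k-1+\ell_1+\ell_2+\cdots)$ in the lemma once $m=\sum_j \ell_j$ is substituted. Apart from this bookkeeping, the proof is mechanical.
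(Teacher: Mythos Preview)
Your proof is correct and follows the standard route to the Lagrange inversion formula: the residue form of the coefficients, the change of variables $w=\phi(z)$, integration by parts to reach $b_k=\frac{1}{k}[z^{k-1}](z/\phi(z))^k$, and then a negative-binomial plus multinomial expansion. The index bookkeeping you flag is handled correctly.

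There is nothing to compare against, however: the paper does not prove this lemma at all. It is quoted as Corollary~11.2 of \cite{Char} and used as a black box in the proof of Lemma~\ref{lem:rho-expand}. So your argument supplies a self-contained proof where the paper simply gives a reference.
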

To study the asymptotics of $N_{\ell}(n)$ we first have to investigate the analytic properties of the Dirichlet series attached to the sequence $g_{\ell -1}(n)$ from Theorem \ref{th:BF} with $f_{\ell}:=g_{\ell -1}$. Solomon \cite{So79} (see also \cite{LUSE}, Theorem 15.1, p. 296) proved that, for $\ell \geq 2$,
\begin{equation}\label{eq:Lg-Zeta}
L_{f_{\ell}}(s) =
 \prod_{k=0}^{\ell -2} \zeta(s- k).
\end{equation}
For the following investigations the following result is essential; its proof follows from standard properties of the Riemann zeta function.
\begin{proposition} \label{prop:Poles-of-Lg}
	The function $L_{f_\ell}$ has one pole at $s=1$ for $\ell=2$, two poles $s\in \{1,2\}$ for $\ell=3$, and three poles $s\in \{\ell-3, \ell-2,\ell-1\}$ for $\ell \geq 4$.
	If $\ell \geq 4$, then we have for $\nu \in \{ \ell-3, \ell-2, \ell-1 \}$
	\begin{align*}
	\mathrm{Res}_{s = \nu} L_{f_\ell}^*(s) = (\nu-1)!\zeta(\nu+1)  \prod_{\substack{0\le k\le\ell-2\\ k \not= \nu-1}} \zeta(\nu - k)
	\end{align*}
	and $L_{f_\ell}(0) = 0$. Additionally, for $\ell \geq 6$ the function $L^*_{f_\ell}$ only has simple poles in $s \in \{\ell-3, \ell-2, \ell-1\}$. For $\ell \in \{4,5\}$, $L_{f_\ell}^*$ has an additional simple pole in $s=0$, with residue
	\begin{align*}
	\mathrm{Res}_{s=0} L_{f_4}^*(s) =  \frac{\zeta'(-2)}{24}, \qquad \mathrm{Res}_{s=0} L_{f_5}^*(s) = \frac{\zeta '(-2)}{2880}.
	\end{align*}
\end{proposition}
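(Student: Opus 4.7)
The plan is to derive every claim directly from the identity $L_{f_\ell}(s)=\prod_{k=0}^{\ell-2}\zeta(s-k)$ of \eqref{eq:Lg-Zeta}, together with the standard facts that $\zeta$ has a unique simple pole at $1$ with residue $1$ and simple trivial zeros at $s=-2m$ for $m\in\N$. First I will locate the poles of $L_{f_\ell}$: each factor $\zeta(s-k)$ produces a simple pole at $s=k+1$, giving candidate poles at $s\in\{1,2,\ldots,\ell-1\}$. At such a point $s=k+1$, another factor $\zeta(s-k')$ with $k'\neq k$ vanishes iff $k+1-k'=-2m$ for some $m\in\N$, i.e.\ $k'=k+1+2m$; such a $k'\in\{0,\ldots,\ell-2\}$ exists iff $k\le\ell-5$. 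A single simple trivial zero cancels the simple pole, so only the poles corresponding to $k\in\{\ell-4,\ell-3,\ell-2\}$ (when $\ell\ge 4$) survive, yielding the three advertised simple poles at $s\in\{\ell-3,\ell-2,\ell-1\}$. The cases $\ell\in\{2,3\}$ are handled identically, no cancellation being available.

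For the residues at $\nu\in\{\ell-3,\ell-2,\ell-1\}$, I will use that both $\Gamma(\nu)=(\nu-1)!$ and $\zeta(\nu+1)$ are finite for $\nu\ge 1$, and that the only factor of $L_{f_\ell}$ with a pole at $s=\nu$ is $\zeta(s-(\nu-1))$, of residue $1$. Multiplying out gives the stated
\begin{equation*}
\Res_{s=\nu}L_{f_\ell}^*(s)=(\nu-1)!\,\zeta(\nu+1)\prod_{\substack{0\le k\le\ell-2\\ k\ne\nu-1}}\zeta(\nu-k).
\end{equation*}

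The analysis at $s=0$ is parallel but requires tracking the trivial zeros of the individual factors. At $s=0$ the vanishing factors of $L_{f_\ell}$ are those with $k\in\{2,4,6,\ldots\}\cap\{0,\ldots,\ell-2\}$, a total of $\lfloor(\ell-2)/2\rfloor$ simple zeros. Since $\Gamma(s)\zeta(s+1)$ has a double pole at $s=0$, the function $L_{f_\ell}^*$ has a double pole for $\ell\in\{2,3\}$ (no vanishing factor), a simple pole for $\ell\in\{4,5\}$ (one vanishing factor, $\zeta(s-2)$), and is holomorphic for $\ell\ge 6$ (at least two vanishing factors); in particular $L_{f_\ell}(0)=0$ whenever $\ell\ge 4$. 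In the simple-pole cases $\ell\in\{4,5\}$ I will substitute
\begin{equation*}
\Gamma(s)=\tfrac{1}{s}+O(1),\qquad \zeta(s+1)=\tfrac{1}{s}+O(1),\qquad \zeta(s-2)=\zeta'(-2)\,s+O(s^2),
\end{equation*}
and use $\zeta(0)=-\tfrac12$, $\zeta(-1)=-\tfrac1{12}$, $\zeta(-3)=\tfrac1{120}$ to read off the stated residues $\zeta'(-2)/24$ and $\zeta'(-2)/2880$.

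The only non-routine piece is the pole–zero bookkeeping in the first paragraph; once that is in place, the remaining steps are straightforward evaluations using known special values of $\zeta$.
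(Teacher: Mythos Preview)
Your argument is correct and is exactly the approach the paper intends: the paper gives no details at all, remarking only that ``its proof follows from standard properties of the Riemann zeta function,'' and your bookkeeping with the factorization \eqref{eq:Lg-Zeta}, the simple pole of $\zeta$ at $1$, and the trivial zeros is precisely that. One small omission: the claim that for $\ell\ge 6$ the function $L_{f_\ell}^*$ has \emph{only} the three simple poles at $\{\ell-3,\ell-2,\ell-1\}$ also requires ruling out the poles of $\Gamma(s)$ at the negative integers, not just at $s=0$. Your $s=0$ argument adapts immediately: at $s=-m$ with $m\ge1$ the factors $\zeta(-m-k)$, $0\le k\le\ell-2$, are $\zeta$ evaluated at $\ell-1\ge5$ consecutive negative integers, hence at least $\lfloor(\ell-1)/2\rfloor\ge2$ of them vanish, more than enough to cancel the simple pole of $\Gamma$. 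With that one line added, the proof is complete.
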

\section{Asymptotic expansions for $N_\ell(n)$}
\subsection{Exponent sets} 
For our calculations, we need the following lemma which follows by a direct calculation.
\begin{lemma} 
	\label{lem:Set-L} For $\ell \geq 3$ and $f = f_\ell$, we have, as $R \to \infty$,
	\begin{align*}
		\mathcal{L} & = \frac{\ell-1}{\ell} - \frac{1}{\ell} \N_0, \quad \mathcal{M} = \frac{1}{\ell} \N_0, \quad \mathcal{N} = \frac{1}{\ell} \N_0.
		\end{align*}
\end{lemma}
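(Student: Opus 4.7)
The plan is to unpack the three Minkowski-sum definitions in \Cref{paper:BBBF:T:main2} directly, once $\alpha$ and $\mathcal{P}_R$ for $L_{f_\ell}^*$ have been pinned down. By \Cref{prop:Poles-of-Lg} the largest real pole of $L_{f_\ell}$ is $\alpha=\ell-1$, so $\alpha+1=\ell$, and $L_{f_\ell}^*$ has positive real poles exactly at $\{\ell-3,\ell-2,\ell-1\}$ (for $\ell=3$ the value $\ell-3=0$ is harmlessly absorbed into the basepoint $\{0\}$ that $\mathcal{P}_R$ adjoins by definition). The first step I would carry out is to verify that $L_{f_\ell}^*$ carries no real poles on $(-\infty,0)$, so that $\mathcal{P}_R=\{0,\ell-3,\ell-2,\ell-1\}$ holds uniformly in $R>0$. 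Using $L_{f_\ell}^*(s)=\Gamma(s)\zeta(s+1)\prod_{k=0}^{\ell-2}\zeta(s-k)$, a candidate pole at $s=-m$ with $m\in\N$ could only arise from $\Gamma$ (simple), while each factor $\zeta(-2j)$ with $j\in\N$ contributes a simple trivial zero. A short parity case distinction on $m$ shows that among the arguments $-m+1$ and $-m-k$ for $0\leq k\leq \ell-2$, at least one lies in $\{-2,-4,-6,\ldots\}$ as soon as $\ell\geq 3$, killing the $\Gamma$-pole.

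With $\mathcal{P}_R$ in hand, the remainder is a bookkeeping exercise. Evaluating $\tfrac{\mu+1}{\ell}-1$ at $\mu\in\mathcal{P}_R$ gives $-\tfrac{\ell-1}{\ell},\,-\tfrac{2}{\ell},\,-\tfrac{1}{\ell},\,0$, and since $-\tfrac{1}{\ell}\N_0$ (from $\mu=\ell-2$) is the coarsest and absorbs the other three Minkowski summands, the second piece in the definition of $\mathcal{L}$ collapses to $-\tfrac{1}{\ell}\N_0$. Combining with $\tfrac{1}{\ell}\mathcal{P}_R\subset\tfrac{1}{\ell}\N_0$, whose maximum is $\tfrac{\ell-1}{\ell}$, yields $\mathcal{L}=\tfrac{\ell-1}{\ell}-\tfrac{1}{\ell}\N_0$. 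The coefficient $\tfrac{1}{\ell}$ in the definition of $\mathcal{M}$ (once more from $\mu=\ell-2$) already generates $\tfrac{1}{\ell}\N_0$, and the cutoff $\theta<\tfrac{R+\ell-1}{\ell}$ relaxes as $R\to\infty$, so $\mathcal{M}=\tfrac{1}{\ell}\N_0$. Finally, $-\mathcal{L}=\tfrac{1}{\ell}\N_0-\tfrac{\ell-1}{\ell}$ gives $(-\mathcal{L})\cap(0,R/\ell)=\{1/\ell,2/\ell,3/\ell,\ldots\}$ for $R$ large, and the $\N_0$-linear combinations of these elements exhaust $\tfrac{1}{\ell}\N_0$.

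The main (mild) obstacle is the no-negative-real-pole verification for $L_{f_\ell}^*$; once that is settled, everything reduces to listing the four elements of $\mathcal{P}_R$ and combining them as above. I expect this verification to be routine once the trivial zeros of $\zeta$ are brought to bear, but it is worth doing carefully, since it is precisely what keeps $\mathcal{P}_R$ independent of $R$ and thereby ensures that the three exponent sets stabilise into the clean form claimed in the lemma.
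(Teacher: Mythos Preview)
Your proposal is correct and is precisely the direct calculation the paper alludes to: determine $\mathcal{P}_R$ from \Cref{prop:Poles-of-Lg} (supplemented by the trivial-zero check ruling out poles of $L_{f_\ell}^*$ at negative integers), then unwind the Minkowski-sum definitions of $\mathcal{L}$, $\mathcal{M}$, $\mathcal{N}$ using that the $\mu=\ell-2$ summand already generates $\tfrac{1}{\ell}\N_0$. The negative-pole verification you flag is indeed the only point needing a short argument, and your parity analysis handles it cleanly.
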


\subsection{The case $\ell = 2$}\ This case is the partition function and is classical. In the setting of the present paper it is 
\begin{equation*} 
N_2(n) = p(n) = \frac{e^{\pi\sqrt{\frac{2n}{3}}}}{4\sqrt{3}n}\left(1+\sum_{j=1}^N \frac{B_j}{n^\frac j2}+O_N\left(n^{-\frac{N+1}{2}}\right)\right)
\end{equation*}
for certain $B_j$ and was treated in \cite{BBBF23}.
\subsection{The case $\ell = 3$}{\hspace{0cm}} The following theorem gives the asymptotic behavior of $N_{3}(n)$.
\begin{theorem} \label{thm:N3} 
	We have, as $n \to \infty$,
	\begin{equation*} 
		N_{3}(n) \sim \frac{e^{-\frac{\zeta'(-1)}2  -\frac{\pi ^2}{288 \zeta (3)}}\zeta(3)^{\frac{11}{72}}}{2^{\frac{11}{24}} \cdot 3^{\frac{47}{72}} \cdot \pi^{\frac{11}{72}} \cdot n^{\frac{47}{72}}}  \exp\left(  \frac{(3\pi)^{\frac23} \zeta(3)^{\frac13}}{2} n^{\frac23} - \frac{\pi^{\frac{4}{3}}}{4 \cdot 3^{\frac23} \cdot \zeta(3)^{\frac13}} n^{\frac{1}{3}} \right)\left( 1 + \sum_{j=1}^\infty \frac{B_{3,j}}{n^{\frac{j}{3}}}\right) 
	\end{equation*}
	for certain numbers $B_{3,j}$.
	\end{theorem}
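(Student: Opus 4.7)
The plan is to apply \Cref{paper:BBBF:T:TwoPoleAsymptotics} (Theorem 4.4 of \cite{BBBF23}) directly to $f = f_3 = g_2 = \sigma_1$. By \eqref{eq:Lg-Zeta} we have $L_{f_3}(s) = \zeta(s)\zeta(s-1)$, whose only positive poles are simple and located at $\alpha = 2$ and $\beta = 1$, with residues $\omega_2 = \zeta(2) = \pi^2/6$ and $\omega_1 = \zeta(0) = -\tfrac12$. Hypotheses \ref{paper:main:1}--\ref{paper:main:4} are routine: since $\sigma_1(n) > 0$ for all $n\geq 1$ we have $\LL = \N$ and \ref{paper:main:1} holds for every $L$; the meromorphic continuation in \ref{paper:main:3} is classical, and \ref{paper:main:4} follows from polynomial growth of $\zeta(s)$ in vertical strips. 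The parameter $\lambda$ of \Cref{paper:BBBF:T:TwoPoleAsymptotics}, determined by $\tfrac{\lambda+1}{\lambda}\beta < \alpha \leq \tfrac{\lambda}{\lambda-1}\beta$, equals $\lambda = 2$ (as $\alpha = 2\beta$), so the exponent features three terms $A_1 n^{2/3} + A_2 n^{1/3} + A_3$.

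Next I compute the explicit constants. With $c_1 = \omega_{\alpha}\Gamma(\alpha+1)\zeta(\alpha+1) = \pi^2\zeta(3)/3$ and $c_2 = \omega_{\beta}\Gamma(\beta+1)\zeta(\beta+1) = -\pi^2/12$, the formulas of \Cref{paper:BBBF:T:TwoPoleAsymptotics} immediately give
\begin{equation*}
A_1 = \tfrac{3}{2} c_1^{1/3} = \frac{(3\pi)^{2/3}\zeta(3)^{1/3}}{2}, \qquad A_2 = \frac{c_2}{c_1^{1/3}} = -\frac{\pi^{4/3}}{4 \cdot 3^{2/3}\zeta(3)^{1/3}}.
\end{equation*}
The pleasant feature of the $\ell = 3$ case is that $K_3$ vanishes: from the remark after \Cref{paper:BBBF:T:TwoPoleAsymptotics}, $K_3 = c_2^2(\alpha-2\beta)/(2(\alpha+1)^2 c_1^{(2\beta+1)/(\alpha+1)}) = 0$. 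Imposing $K_3 = 0$ in the $k=3$ formula, only two index combinations survive in the two multinomial sums (the $(m,j_1,j_2) = (2,2,0)$ term in the first and $(1,1,0)$ in the second). Using $K_2 = c_2/(3 c_1^{1/3})$ then gives
\begin{equation*}
A_3 = \frac{3 K_2^2}{2 c_1^{1/3}} - \frac{c_2 K_2}{c_1^{2/3}} = -\frac{c_2^2}{6 c_1} = -\frac{\pi^2}{288\,\zeta(3)}.
\end{equation*}

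For the prefactor I use the formulas for $C$ and $b$ from \Cref{paper:BBBF:T:main2}, which remain in force in \Cref{paper:BBBF:T:TwoPoleAsymptotics}. Computing $L_{f_3}(0) = \zeta(0)\zeta(-1) = 1/24$ and $L_{f_3}'(0) = \zeta'(0)\zeta(-1) + \zeta(0)\zeta'(-1) = \log(2\pi)/24 - \zeta'(-1)/2$ yields $b = (1 - L_{f_3}(0) + \alpha/2)/(\alpha+1) = 47/72$ and
\begin{equation*}
C = \frac{(2\pi)^{1/24} e^{-\zeta'(-1)/2} \left(\pi^2\zeta(3)/3\right)^{11/72}}{\sqrt{6\pi}}.
\end{equation*}
Multiplying by $e^{A_3}$ and simplifying the resulting powers of $2$, $3$, and $\pi$ (the powers are $1/24 - 1/2 = -11/24$ for $2$, $-11/72 - 1/2 = -47/72$ for $3$, and $1/24 + 11/36 - 1/2 = -11/72$ for $\pi$) yields exactly the stated prefactor. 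Finally, the polynomial correction $1 + \sum_{j \geq 1} B_{3,j}/n^{j/3}$ follows from \Cref{paper:BBBF:T:TwoPoleAsymptotics} combined with \Cref{lem:Set-L}, which shows that the exponent set $\mathcal{M} + \mathcal{N}$ attached to $f_3$ equals $\tfrac{1}{3}\N_0$. The main obstacle is not conceptual but the tedious bookkeeping of rational exponents across the simplification of $C$ and the verification that only two surviving terms contribute to $A_3$ after the $K_3 = 0$ cancellation.
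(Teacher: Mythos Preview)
Your proposal is correct and follows essentially the same route as the paper: both apply \Cref{paper:BBBF:T:TwoPoleAsymptotics} to $f_3=\sigma_1$ with $\alpha=2$, $\beta=1$, $\lambda=2$, verify the hypotheses, and compute $A_1,A_2,A_3$, $b$, and $C$ from $c_1=\pi^2\zeta(3)/3$, $c_2=-\pi^2/12$, $L_{f_3}(0)=1/24$, $L_{f_3}'(0)=\log(2\pi)/24-\zeta'(-1)/2$. Your write-up is a bit more explicit than the paper in two places (the observation that $K_3=0$ because $\alpha=2\beta$, and the bookkeeping of the rational exponents in the prefactor), but this is elaboration rather than a different argument.
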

\begin{proof}
	We have by \eqref{eq:Lg-Zeta} (see \cite{Apostol}, p. 231)
	\begin{align} \label{eq:L-g3}
		L_{f_3}(s) = \zeta(s)\zeta(s-1) = \sum_{n=1}^\infty \frac{\sigma_1(n)}{n^s}.
	\end{align}
	Thus the only positive poles of $L_{f_3}(s)$ are $\alpha = 2$ and $\beta=1$. Moreover, as $f_3(n) =  \sigma_1(n) \not= 0$ for $n \in \N$, we have $f_3^{-1}(\{0\}) = \emptyset$, and hence we can choose $L$ arbitrarily large. We see that (P1) is satisfied. We have $\mathcal{P} = \{1,2\}$ and these poles are simple, so (P2) is satisfied. Note that we may choose $R$ arbitrarily large, as $\zeta(s)\zeta(s-1)$ has a meromorphic continuation to the entire complex plane.  By properties of the Riemann zeta function, $L_{f_3}(\sigma+it) \ll e^{a|t|}$ on vertical strips with finite width for arbitrary $a > 0$, so in particular (P3) is satisfied. Thus we can use \Cref{paper:BBBF:T:TwoPoleAsymptotics}.  \par
	First, we note that $\lambda=2$ satisfies $\frac{\lambda+1}\lambda \beta < \alpha \leq \frac\lambda{\lambda-1} \beta$. Using \eqref{eq:L-g3}, we compute 
	\begin{align*}
		\omega_{2} = \frac{\pi^{2}}{6}, \quad \omega_{1} = -\frac{1}{2}, \quad L_{f_3}(0) = \frac{1}{24}, \quad 
		L'_{f_3}(0) = \frac{\log(2\pi)}{24}  - \frac{\zeta'(-1)}2.
	\end{align*}
	Thus
	\begin{align} \label{eq:N3CbA}
		C = \frac{e^{L'_{g_3}(0)}\zeta(3)^{\frac{11}{72}}}{\sqrt{2} \cdot 3^{\frac{47}{72}} \cdot \pi^{\frac{7}{36}}}, \quad 
		b = \frac{47}{72}, \quad A_1 = \frac{(3\pi)^{\frac23} \zeta(3)^{\frac13}}{2},  \quad A_2 = - \frac{\pi^{\frac{4}{3}}}{4 \cdot 3^{\frac23} \cdot \zeta(3)^{\frac13}}.
	\end{align}
	Using that $c_{2} = -\frac{\pi^{2}}{12}$ and $c_{1} = \frac{\pi^{2}\zeta(3)}{3}$ we next compute
		\begin{align*}
	A_3 & = -\frac{\pi ^2}{288 \zeta (3)}.
	\end{align*}
	Finally, we find with \Cref{paper:BBBF:T:TwoPoleAsymptotics} and \eqref{eq:N3CbA}
	\begin{align*}
		N_3(n) &\sim \frac{e^{-\frac{\zeta'(-1)}2  -\frac{\pi ^2}{288 \zeta (3)}}\zeta(3)^{\frac{11}{72}}}{2^{\frac{11}{24}} \cdot 3^{\frac{47}{72}} \cdot \pi^{\frac{11}{72}} \cdot n^{\frac{47}{72}}}  \exp\left(  \frac{(3\pi)^{\frac23} \zeta(3)^{\frac13}}{2} n^{\frac23} - \frac{\pi^{\frac{4}{3}}}{4 \cdot 3^{\frac23} \cdot \zeta(3)^{\frac13}} n^{\frac{1}{3}} \right).
	\end{align*}
	With Lemma \ref{lem:Set-L} and \Cref{paper:BBBF:T:main2} we conclude that the exponents in the polynomial terms in the expansions of $N_3(n)$ are given by $\frac{1}{3} \N_0$, as $(\mathcal{M} + \mathcal{N}) \cap [0,\infty) = \frac{1}{3} \N_0$.
\end{proof}

\subsection{The cases $\ell \in \{4,5\}$}\ We are  now ready to determine the asymptotic behavior of $N_4(n)$ and $N_5(n)$.
\begin{theorem} \label{thm:N345} We have, as $n \to \infty$, 
	\begin{align*}
	N_{4}(n) & \sim \frac{e^\frac{\zeta'(-2)}{24} \pi^{\frac{1}{4}} \zeta(3)^{\frac{1}{8}}}{2^{\frac{13}{8}}\cdot 3^{\frac{1}{4}}\cdot 5^{\frac{1}{8}} \cdot n^{\frac{5}{8}}} \exp\left( \frac{2^{\frac{7}{4}} \cdot \pi ^{\frac{3}{2}} \cdot \zeta(3)^{\frac{1}{4}}}{3^{\frac{3}{2}} \cdot 5^{\frac14}}n^{\frac{3}{4}} + A_{4,2}n^{\frac12} + A_{4,3}n^{\frac{1}{4}} + A_{4,4}\right) \left( 1 + \sum_{j=1}^\infty \frac{B_{4,j}}{n^{\frac{j}{4}}}\right),\\
	N_{5}(n) & \sim \frac{e^{\frac{\zeta'(-2)}{2880}}(\pi \zeta(3) \zeta(5))^{\frac{1}{10}}}{2^{\frac25} \cdot 3^{\frac15} \cdot 5^{\frac35} \cdot n^{\frac{3}{5}}} \\
	& \times  \exp\left( \frac{5^{\frac{4}{5}} \cdot \pi ^{\frac{6}{5}} \cdot (\zeta (3) \zeta (5))^{\frac{1}{5}}}{2^{\frac{9}{5}} \cdot 3^{\frac{2}{5}}}n^{\frac{4}{5}} + A_{5,2}n^{\frac{3}{5}} + A_{5,3}n^{\frac{2}{5}} + A_{5,4}n^{\frac{1}{5}} + A_{5,5}\right) \left( 1 + \sum_{j=1}^\infty \frac{B_{5,j}}{n^{\frac{j}{5}}}\right),
	\end{align*}
	with computable constants $A_{4,j}$ ($2 \leq j \leq 4$) and $A_{5,j}$ ($2 \leq j \leq 5$) and certain $B_{4,j}$ and $B_{5,j}$. 
\end{theorem}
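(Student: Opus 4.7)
The plan is to apply the general asymptotic machinery of \Cref{paper:BBBF:T:main2} to the sequences $N_4(n)$ and $N_5(n)$, whose generating functions are, by Theorem~\ref{th:BF}, of the form $G_{f_\ell}(z)$ with $f_\ell = g_{\ell-1}$ and $L_{f_\ell}(s) = \prod_{k=0}^{\ell-2}\zeta(s-k)$.  The novelty relative to \Cref{paper:BBBF:T:TwoPoleAsymptotics} is that $L_{f_\ell}$ now has three positive poles rather than two, so I cannot simply quote that result and must instead extract the explicit constants via the saddle-point machinery of \Cref{prop:BBBF:paper:cor:rhosaddleexp1} and \Cref{lem:BBBF:paper:L:mainasy}.

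First I would verify \ref{paper:main:1}--\ref{paper:main:4}. Since $f_\ell(n)=g_{\ell-1}(n)>0$ for all $n\in\N$, we may take $L$ arbitrarily large; meromorphicity of $\zeta$ on $\C$ allows arbitrary $R$; and polynomial growth of $\zeta$ on vertical strips together with Stirling's estimate for $\Gamma$ gives~\ref{paper:main:4}. \Cref{prop:Poles-of-Lg} then tells us that for $\ell=4$, $L_{f_4}$ has positive poles $\alpha=3$, $\beta=2$, $\gamma=1$, and $L^*_{f_4}$ has an additional simple pole at $s=0$; analogously for $\ell=5$ with $\alpha=4$, $\beta=3$, $\gamma=2$.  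Lemma~\ref{lem:Set-L} then pins down the exponent set to $\mathcal{L}\cap[0,\alpha/(\alpha+1)]=\{k/\ell:0\le k\le \ell-1\}$, which explains the number of $A_{\ell,k}$ in each case, and $(\mathcal{M}+\mathcal{N})\cap\R^+=\frac{1}{\ell}\N$ which gives the polynomial correction terms in $n^{-j/\ell}$.

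Next I would compute the leading constant and prefactor using the formulas from \Cref{paper:BBBF:T:main2}.  For $\ell=4$, with $\omega_3=\zeta(3)\zeta(2)=\pi^2\zeta(3)/6$, $\Gamma(4)=6$, $\zeta(4)=\pi^4/90$, one directly verifies
\begin{align*}
A_1 = \left(\omega_3\Gamma(4)\zeta(4)\right)^{\frac14}\tfrac{4}{3} = \frac{2^{7/4}\pi^{3/2}\zeta(3)^{1/4}}{3^{3/2}\,5^{1/4}},
\qquad b = \frac{1-L_{f_4}(0)+\frac32}{4} = \frac{5}{8}.
\end{align*}
The exponential prefactor $e^{\zeta'(-2)/24}$ comes out of $e^{L'_{f_4}(0)}$: since $\zeta(-2)=0$, applying Leibniz to $L_{f_4}(s)=\zeta(s)\zeta(s-1)\zeta(s-2)$ at $s=0$ leaves only $\zeta(0)\zeta(-1)\zeta'(-2)=\zeta'(-2)/24$. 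The case $\ell=5$ works analogously, with the additional $\zeta(-3)=1/120$ producing $L'_{f_5}(0)=\zeta'(-2)/2880$.  The remaining factors in $C$ come from plugging into the formula of \Cref{paper:BBBF:T:main2}.

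The main obstacle is the explicit determination of the intermediate constants $A_{\ell,k}$ for $k\ge 2$, since three positive poles are not covered by \Cref{paper:BBBF:T:TwoPoleAsymptotics}.  Following the proof strategy of Lemma~4.3 of \cite{BBBF23}, I would expand
\begin{align*}
-\Phi'_{f_\ell}(\rho) = \sum_{\mu\in\mathcal P_R}\mathrm{Res}_{s=\mu}\!\left[\Gamma(s)\zeta(s)L_{f_\ell}(s)\rho^{-s-1}\right]+\text{lower order},
\end{align*}
keeping the three leading terms arising from $\mu=\alpha,\beta,\gamma$, and then invert the relation $-\Phi'_{f_\ell}(\rho_n)=n$ for $\rho_n$ via the Lagrange inversion formula (Lemma~\ref{lem:power-series-inverse}) applied to the three-term perturbation of the leading monomial.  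Substituting the resulting expansion of $\rho_n$ into $n\rho_n+\Phi_{f_\ell}(\rho_n)$ and re-expanding, the coefficients of $n^{(\ell-k)/\ell}$ yield $A_{\ell,k}$; the saddle-point width computed via \Cref{lem:BBBF:paper:L:mainasy} gives the polynomial correction.  The combinatorics is more involved than in the two-pole case because one now gets multinomial sums indexed by compositions of $k-1$ into three parts weighted by the three residues, but it is completely mechanical; the hard part is just bookkeeping to verify the closed forms in the statement (which is why only $A_1$ is written out explicitly and the remaining $A_{\ell,k}$ are left as "computable").
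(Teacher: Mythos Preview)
Your proposal is correct and follows essentially the same route as the paper: verify the hypotheses of \Cref{paper:BBBF:T:main2}, invoke \Cref{prop:Poles-of-Lg} for the pole data and $L_{f_\ell}(0)=0$, compute $C$, $b$, $A_1$, and $L'_{f_\ell}(0)$ directly, and read off the exponent sets from \Cref{lem:Set-L}. The one point of divergence is that the paper's proof is much terser: since \Cref{paper:BBBF:T:main2} already guarantees the existence and computability of the $A_{\ell,k}$ for $k\ge 2$, the paper simply cites that theorem and does not revisit the saddle-point/Lagrange-inversion machinery here; the three-pole expansion you sketch is instead carried out once and for all in the proof of \Cref{thm:Main-Nl} (via \Cref{lem:rho-expand}). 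So your last paragraph is correct but superfluous for this particular theorem.
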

\begin{proof} 
	By \Cref{paper:BBBF:T:main2} we have, for $\ell \in \{4,5\}$, as $\a = \ell-1$, $L_{f_\ell}(0) = 0$, and $\omega_{\ell-1} = \z(2) \z(3) \cdots \z(\ell-1)$ (all by \Cref{prop:Poles-of-Lg}) 
	\begin{align*}
	C = \frac{e^{L_{f_\ell}'(0)} (\ell-1)!^{\frac{1}{2\ell}}\sqrt{Z_\ell}}{\sqrt{2\pi\ell}}, \quad b = \frac{\ell+1}{2\ell}.
	\end{align*}
	We find 
	\begin{align*}
	L'_{f_4}(0) = \frac{\z'(-2)}{24}, \quad L'_{g_5}(0) = \frac{\z'(-2)}{2880}.
	\end{align*}
	The constants $A_{4,1}$ and $A_{5,1}$ can now be computed by straightforward calculations. With Lemma \ref{lem:Set-L} and \Cref{paper:BBBF:T:main2} we conclude that the exponents in the polynomial terms in the expansions of $N_\ell(n)$ are given by $\frac{1}{\ell} \N_0$ (for $\ell \in \{4,5\}$), as $(\mathcal{M} + \mathcal{N}) \cap [0,\infty) = \frac{1}{\ell} \N_0$.
\end{proof}

\subsection{Proof of \Cref{thm:Main-Nl}}\hspace{0cm} To prove \Cref{thm:Main-Nl}, we do some preliminary considerations. By Lemma 3.2 of \cite{BBBF23} we have that, as $z \to 0$ in a cone in the right half plane,
	\begin{align} \label{eq:Phi}
	\Phi_{f_\ell}(z) = \sum_{\substack{\nu\in -\mathcal{P}_R\setminus\{0\}}} \mathrm{Res}_{s=-\nu} L_{f_\ell}^*(s)z^{\nu} - L_{f_\ell}(0)\operatorname{Log}(z) + L_{f_\ell}'(0) + O_{R}\left(|z|^R\right)
	\end{align}
and for $k\in\N$
\begin{align} \label{eq:Phi-k}
	\Phi_{f_\ell}^{(k)}(z) = \sum_{\substack{\nu\in -\mathcal{P}_R\setminus\{0\}}} (\nu)_k \mathrm{Res}_{s=-\nu} L_{f_\ell}^*(s)z^{\nu - k} + \frac{(-1)^k(k-1)! L_{f_\ell}(0)}{z^k} + O_{R,k}\left(|z|^{R-k}\right),
\end{align}
where $(s)_k := s(s-1) \cdots (s-k+1)$. By \Cref{prop:Poles-of-Lg}, $L_{f_\ell}$ has exactly three positive poles $\{\ell-3, \ell-2, \ell-1\}$. As $L_{f_\ell}(0) = 0$ by \Cref{prop:Poles-of-Lg} the above becomes
\begin{align} \label{eq:Phi-prime}
	- \Phi'_{f_\ell}(z) = \frac{C_1}{z^\ell} + \frac{C_2}{z^{\ell-1}} + \frac{C_3}{z^{\ell-2}} + O_{R}\left(|z|^{R-1}\right),
\end{align}
where we have, according to \Cref{prop:Poles-of-Lg} and $k=1$ in \eqref{eq:Phi-k},
\begin{align}
	\nonumber C_1 &= (\ell-1)! \zeta(\ell)\prod_{\substack{0\le k\le\ell-2 \\ k \not= \ell-2}} \zeta\left(\ell-1 - k\right) > 0, \quad 
	C_2  = (\ell-2)! \zeta(\ell-1)\prod_{\substack{0\le k\le\ell-2 \\ k \not= \ell-3}} \zeta(\ell-2 - k) \ne 0,\\
	\label{eq:C123} C_3 &=  (\ell-3)! \zeta(\ell-2)\prod_{\substack{0\le k\le\ell-2 \\ k \not= \ell-4}} \zeta(\ell-3- k) \ne 0.
\end{align}
\par 
To calculate the exponent sets $\mathcal{L}, \mathcal{M},$ and $\mathcal{N}$, we apply \Cref{prop:BBBF:paper:cor:rhosaddleexp1}. Let $\alpha := \ell-1$. By \Cref{prop:BBBF:paper:cor:rhosaddleexp1} the exponents of the saddle point function $\varrho_{\ell,n}$ belonging to $f_\ell$ lie in $\frac{1}{\ell} \N$. Our next goal is to prove the following version of Lemma 4.3 of \cite{BBBF23}.
\begin{lemma} \label{lem:rho-expand} For $\ell\ge4$, let $f := f_{\ell}$. Then, as $n\to\infty$,
	\begin{equation*}
		\p_{\ell,n} \sim \sum_{j=1}^{\infty} \frac{K_{\ell,j}}{n^{\frac{j}{\ell}}} 
	\end{equation*}
	for some constants $K_{\ell,j}$ independent of $n$ that can be calculated explicitly using $a_k(x)$, $b_k(x)$, and $e_h$ defind below in \eqref{eq:ak(x)}, \eqref{eq:bk(x)}, and \eqref{eq:w(x)}, respectively. We have 
	\begin{align} \label{eq:K1ell}
	K_{\ell,1} = C_1^{\frac{1}{\ell}}.
	\end{align}
\end{lemma}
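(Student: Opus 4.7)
The plan is to solve the saddle point equation $-\Phi_{f_\ell}'(\varrho_{\ell,n}) = n$ by inverting the explicit expansion \eqref{eq:Phi-prime}. Multiplying \eqref{eq:Phi-prime} through by $\varrho^\ell$ turns the singular identity into the approximate polynomial equation
\begin{equation*}
n\varrho^\ell = C_1 + C_2 \varrho + C_3 \varrho^2 + O\!\left(\varrho^{R + \ell - 1}\right).
\end{equation*}
Since $\varrho_{\ell,n} \to 0$ as $n \to \infty$, the leading balance forces $\varrho^\ell \sim C_1/n$, which suggests the rescaling $\varrho = xy$ with $x := n^{-1/\ell}$ --- an ansatz already consistent with \Cref{prop:BBBF:paper:cor:rhosaddleexp1} and \Cref{lem:Set-L}, which together identify the exponent set in the expansion of $\varrho_{\ell,n}$ as $\frac{1}{\ell}\N$.

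After this substitution, and using $nx^\ell = 1$, the equation reduces to
\begin{equation*}
y^\ell = C_1 + C_2 x y + C_3 x^2 y^2 + O\!\left(x^R\right),
\end{equation*}
which is an analytic implicit equation in $(x,y)$ near $(0, C_1^{1/\ell})$. The partial derivative in $y$ of its left-hand side minus right-hand side evaluates at $(0, C_1^{1/\ell})$ to $\ell C_1^{(\ell-1)/\ell} \neq 0$, so the implicit function theorem provides a unique analytic branch $y = y(x) = \sum_{j \geq 0} d_j x^j$ with $d_0 = C_1^{1/\ell}$ (the positive real $\ell$-th root, consistent with $\varrho_{\ell,n} > 0$). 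Back-substitution yields $\varrho_{\ell,n} = x \, y(x) = \sum_{j \geq 1} K_{\ell,j} n^{-j/\ell}$ with $K_{\ell,j} := d_{j-1}$, and in particular $K_{\ell,1} = C_1^{1/\ell}$, which is \eqref{eq:K1ell}.

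For $j \geq 2$, I would obtain explicit formulas for the $K_{\ell,j}$ in terms of the auxiliary functions $a_k(x)$, $b_k(x)$, and $e_h$ from \eqref{eq:ak(x)}, \eqref{eq:bk(x)}, and \eqref{eq:w(x)} by applying the Lagrange inversion formula (\Cref{lem:power-series-inverse}) to the power series inverse, in close analogy with the proof of Lemma 4.3 of \cite{BBBF23} for the two-pole case; the genuinely new feature is the contribution of the third positive pole through $C_3 x^2 y^2$, which adds one further family of multinomial terms to the coefficients. The main obstacle will be the bookkeeping of error terms, since \eqref{eq:Phi-prime} is only an asymptotic identity with remainder $O(|z|^{R-1})$ whereas the implicit function theorem is a statement about honestly analytic functions. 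I would handle this as in \cite{BBBF23}: fix an arbitrary truncation order $N$, choose $R$ large enough that the remainder contributes only past order $n^{-N/\ell}$, solve the truncated analytic equation by Lagrange inversion, and then let $R \to \infty$ to build up the complete asymptotic series.
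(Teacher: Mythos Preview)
Your approach is correct and closely parallels the paper's: both reduce the saddle-point equation to an implicit algebraic relation in $x = n^{-1/\ell}$ and then solve by power-series inversion near the obvious root. The main difference is the choice of auxiliary variable. You set $y := n^{1/\ell}\varrho_{\ell,n}$ (so $y \to C_1^{1/\ell}$) and arrive at $y^\ell = C_1 + C_2xy + C_3x^2y^2$, whence $\varrho_{\ell,n} = x\,y(x)$ is read off directly. The paper instead sets $z := n^{-1/\ell}/\varrho_{\ell,n} = 1/y$ (so $z \to C_1^{-1/\ell}$), obtains the curve $C_1z^\ell + C_2xz^{\ell-1} + C_3x^2z^{\ell-2} = 1$, shifts $z = w + C_1^{-1/\ell}$, and applies Lagrange inversion explicitly to $\varphi_x(w) := \sum_{k\ge 1} a_k(x)w^k = -a_0(x)$; this is where the objects $a_k(x)$, $b_k(x)$, $e_h$ in \eqref{eq:ak(x)}--\eqref{eq:w(x)} are actually defined. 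The paper then needs a final geometric-series step to pass from $w(x)$ back to $\varrho_{\ell,n} = x/(C_1^{-1/\ell}+w(x))$, which your formulation avoids. Conversely, because the lemma statement refers to those specific quantities $a_k(x),b_k(x),e_h$, a complete proof in your variables would produce an equivalent but differently-packaged set of coefficient formulas (related to the paper's by the reciprocal substitution). Your remark on handling the $O(|z|^{R-1})$ remainder by truncating, choosing $R$ large, and invoking the analysis of \cite{BBBF23} matches exactly what the paper does.
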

\begin{proof} Note that it suffices to treat all of the following asymptotic expansions formally, as in Section 3 of \cite{BBBF23} it was shown that the analytical considerations can be made rigorous under the assumptions of \Cref{paper:BBBF:T:main2}.\par
	By \Cref{prop:BBBF:paper:cor:rhosaddleexp1}, the exponents of $\varrho_{\ell,n}$ lie in $\frac{1}{\ell}\N$. Hence, by Proposition 3.3 of \cite{BBBF23} there exist $K_{\ell,j} \in \R$, such that
	\begin{align*}
		\varrho_{\ell,n} \sim \sum_{j=1}^\infty \frac{K_{\ell,j}}{n^{\frac{j}{\ell}}} \qquad (n \to \infty).
	\end{align*}
	With \eqref{eq:Phi-prime} we obtain $a_{-\Phi'_{f_\ell},1} = C_1$. We also have $a_{\varrho_\ell,1} = a_{-\Phi'_{f_\ell},1}^{\frac{1}{\ell}}$ by \Cref{prop:BBBF:paper:cor:rhosaddleexp1}, since $\alpha +1 = \ell$. We conclude that $K_{\ell,1} = a_{\varrho_\ell,1} = C_1^{\frac{1}{\ell}}$. 
		
	 For the exact values of the coefficients $K_{\ell,j}$ we need to resolve 
	\begin{align} \label{eq:C1C2C3-rho}
		\frac{C_1}{\varrho_{\ell,n}^{\ell}} + \frac{C_2}{\varrho_{\ell,n}^{\ell-1}} + \frac{C_3}{\varrho_{\ell,n}^{\ell-2}} - n = 0
	\end{align}
	by \eqref{eq:Phi-prime} and \Cref{prop:BBBF:paper:cor:rhosaddleexp1}. In order to do so, we divide \eqref{eq:C1C2C3-rho} by $n$ to obtain
	\begin{align*}
		C_1 \left( \frac{n^{-\frac{1}{\ell}}}{\varrho_{\ell,n}}\right)^\ell + C_2 n^{-\frac{1}{\ell}} \left( \frac{n^{-\frac{1}{\ell}}}{\varrho_{\ell,n}}\right)^{\ell-1} + C_3 n^{-\frac{2}{\ell}} \left( \frac{n^{-\frac{1}{\ell}}}{\varrho_{\ell,n}}\right)^{\ell-2}   - 1 = 0.
	\end{align*}
	Thus we are interested in points $(x,z) = (x,z(x))$ on the curve
	\begin{align*}
		C_1 z^\ell + C_2 x z^{\ell-1} + C_3 x^2  z^{\ell-2} -1 = 0 
	\end{align*}
	for small values of $x$, where $z = \frac{n^{-\frac{1}{\ell}}}{\varrho_{\ell,n}}$. Making the change of variables 
	\begin{align} \label{eq:Change}
	z =: w + C_1^{-\frac{1}{\ell}} 
	\end{align}
	we obtain 
	\begin{equation*}
		C_1 \sum_{k=0}^{\ell} {\ell \choose k} w^{k} C_1^{-\frac{\ell-k}{\ell}} + C_2 x \sum_{k=0}^{\ell-1} {\ell-1 \choose k} w^{k} C_1^{-\frac{\ell - 1-k}{\ell}} + C_3 x^2 \sum_{k=0}^{\ell-2} {\ell-2 \choose k} w^{k} C_1^{-\frac{\ell-2-k}{\ell}} - 1= 0,
	\end{equation*}
	using the Binomial Theorem. We rewrite this in the shape:
	\begin{align*} 
		\sum_{k=0}^{\ell} a_k(x) w^k = 0,
	\end{align*}
	where
	\begin{align} \label{eq:ak(x)}
		a_k(x) :=  {\ell \choose k} C_1^{\frac{k}{\ell}}  + C_2  {\ell-1 \choose k} C_1^{\frac{k+1}{\ell}-1}x  + C_3  {\ell-2 \choose k} C_1^{\frac{k+2}{\ell}-1}x^2  - \delta_{k=0},
	\end{align}
	where $\delta_{\mathcal S}=1$ if a statement $\mathcal S$ holds and $\delta_{\mathcal S} = 0$ otherwise. In other words, each $a_k(x)$ is a polynomial in $x$ of degree at most $2$. As 
		$$a_0(x) =  C_2C_1^{\frac{1}{\ell}-1}x + C_3C_1^{\frac{2}{\ell}-1}x^2,$$ 
	we obtain the identity
	\begin{align} \label{eq:Power-Zero}
		\varphi_x(w) := \sum_{k=1}^{\ell} a_k(x) w^k = -a_0(x) = - C_2C_1^{\frac{1}{\ell}-1}x - C_3C_1^{\frac{2}{\ell}-1}x^2.
	\end{align}
	We now apply \Cref{lem:power-series-inverse} to \eqref{eq:Power-Zero} to Taylor approximate its root close to $w=0$ as $x \to 0$. As $C_1 \not= 0$ (we even show $C_{1}>0$ in \eqref{eq:C123}), we have, by definition
	\begin{equation}\label{eq:a1x}
		a_1(x) = \ell C_1^{\frac{1}{\ell}} + (\ell-1)C_2C_1^{\frac{2}{\ell}-1}x + (\ell-2)C_3C_1^{\frac{3}{\ell}-1}x^2\not= 0.
	\end{equation}
	for all $x$ sufficiently small (i.e., all $n$ sufficiently large). Thus $\varphi_x(0) = 0$ and $\varphi_x'(0) = a_1(x) \not= 0$. Thus we may apply \Cref{lem:power-series-inverse} and obtain that the coefficients of the inverse function are 
	\begin{align}\label{eq:bk(x)}
	b_k(x) &= \frac{1}{ka_1(x)^k} \sum_{\substack{\ell_1,\ell_2, \ell_3 \dots \geq 0 \\ \ell_1 + 2\ell_2 + \dots = k-1}} (-1)^{\ell_1 + \ell_2 +\ell_3+\dots} \frac{k \cdots (k-1+\ell_1+\ell_2+\dots)}{\ell_1! \ell_2! \ell_3! \cdots}\\
	\nonumber & \hspace{5cm} \times \left( \frac{a_2(x)}{a_1(x)}\right)^{\ell_1} \left( \frac{a_3(x)}{a_1(x)}\right)^{\ell_2} \cdots.
	\end{align}
As a result, for $x$ sufficiently small, we can solve \eqref{eq:Power-Zero} with
\begin{align} \label{eq:w(x)}
w(x) = \sum_{k = 1}^\infty b_k(x) \left( - C_2C_1^{\frac{1}{\ell}-1}x - C_3C_1^{\frac{2}{\ell}-1}x^2\right)^k =: \sum_{h=0}^\infty e_h x^h
\end{align}
with $e_h \in \R$.  We now have an expansion for $w = z - C_1^{-\frac{1}{\ell}}$ in powers of $n^{-\frac{1}{\ell}}$. Resolving \eqref{eq:Change} (recall $z = \frac{n^{-\frac{1}{\ell}}}{\varrho_{\ell,n}}$), i.e., 
	\begin{align*}
		\frac{n^{-\frac{1}{\ell}}}{\varrho_{\ell,n}} = C_1^{-\frac{1}{\ell}}  + w\left(n^{-\frac{1}{\ell}}\right)
	\end{align*}
	leads to the closed expression 
	\begin{equation*}
		\varrho_{\ell,n} = \frac{n^{-\frac{1}{\ell}}}{C_1^{-\frac{1}{\ell}}  + w\left(n^{-\frac{1}{\ell}}\right)} = C_1^{\frac{1}{\ell}} n^{-\frac{1}{\ell}} \sum_{m=0}^\infty C_1^{\frac{m}{\ell}} w\left(n^{-\frac{1}{\ell}}\right)^m.
	\end{equation*}
	The lemma now follows as the previous equation gives the desired expansion.
\end{proof}
We are now ready to prove \Cref{thm:Main-Nl}.
\begin{proof}[Proof of \Cref{thm:Main-Nl}]
	We identify the part of the asymptotic in $$e^{n\varrho_{\ell,n}}G_{f_\ell}(\varrho_{\ell,n}) = e^{n\varrho_{\ell,n} + \Phi_{f_\ell}(\varrho_{\ell,n})}$$ with non-negative exponents. In other words, we ask for the term
	\begin{equation} \label{eq:Main-Asy}
		e^{n\varrho_{\ell,n} + \Phi_{f_\ell}(\varrho_{\ell,n})} = \exp\left( \left[ n\varrho_{\ell,n} + \Phi_{f_\ell}(\varrho_{\ell,n})\right]_*\right)(1 + o(1)),
	\end{equation}	
	where the notation $[]_*$ means the part of an asymptotic expansion $\sum_{j=1}^\infty a_j n^{\beta_j}$ with $\beta_j \geq 0$. 
	First we get with \Cref{lem:rho-expand}, as $n \to \infty$,
	\begin{align} \label{eq:nrhon}
	n\varrho_{\ell,n} = \sum_{j=1}^{\ell} K_{\ell,j}n^{\frac{\ell-j}{\ell}} + O\left(n^{-\frac{1}{\ell}}\right).
	\end{align}
		Moreover, with \eqref{eq:Phi} and integrating \eqref{eq:Phi-prime} we get as $z \to 0^+$ (in a cone) 
	\begin{equation*}
	\Phi_{f_\ell}(z) = \frac{C_1}{(\ell-1)z^{\ell-1}} + \frac{C_2}{(\ell-2)z^{\ell-2}} + \frac{C_3}{(\ell-3)z^{\ell-3}} + L'_{g_\ell}(0) + o(1),
	\end{equation*}
	since $L_{f_{\ell}}(0)=0$.
	Thus in particular, as $n \to \infty$,
	\begin{equation} \label{eq:Phi-Expansion}
	\Phi_{f_\ell}(\varrho_{\ell,n}) = \frac{C_1}{(\ell-1) \varrho_{\ell,n}^{\ell-1}} + \frac{C_2}{(\ell-2) \varrho_{\ell,n}^{\ell-2}} + \frac{C_3}{(\ell-3)\varrho_{\ell,n}^{\ell-3}} + L'_{g_\ell}(0) + o(1).
	\end{equation}
	We next use \eqref{eq:K1ell} and rewrite 
	\begin{align*}
		\frac{1}{\varrho_{\ell,n}} & \sim \frac{1}{C_1^{\frac{1}{\ell}}n^{-\frac{1}{\ell}} + \sum_{j=2}^\infty K_{\ell,j}n^{-\frac{j}{\ell}}} = \frac{n^{\frac{1}{\ell}}}{C_1^{\frac{1}{\ell}}}  \left( 1 + \frac{1}{C_1^{\frac{1}{\ell}}} \sum_{j=1}^\infty \frac{K_{\ell,j+1}}{n^{\frac{j}{\ell}}} \right)^{-1}.
	\end{align*}
	We can work with the following identity regarding formal power series: 
	\begin{equation*}
		\frac{1}{1 + \sum_{k=1}^\infty b_k z^k} = \sum_{m=0}^\infty (-1)^m \left( \sum_{k=1}^\infty b_k z^k\right)^m = 1 + \sum_{n=1}^\infty c_n z^n,
	\end{equation*}
	with
	\begin{equation*}
		c_n = \sum_{\substack{j_1, j_2, \ldots, j_{n} \geq 0  \\ j_1 + 2j_2 + 3j_3 + \dots = n}} (-1)^{j_1+j_2+\dots} {j_1 + j_2 + j_3 + \dots \choose {j_1, j_2, j_3, \ldots }}  b_1^{j_1} b_2^{j_2} b_3^{j_3} \cdots.
	\end{equation*}
	We thus obtain
	\begin{equation*}
		\frac{1}{\varrho_{\ell,n}} \sim n^{\frac{1}{\ell}} \sum_{m=0}^\infty \frac{D_{m}}{n^{\frac{m}{\ell}}},
	\end{equation*}
	where 
	\begin{equation*} 
		D_{m} :=  C_1^{-\frac{1}{\ell}}\sum_{\substack{j_1, j_2, \ldots \geq 0  \\ j_1 + 2j_2 + 3j_3 + \dots = m}} (-1)^{j_1+j_2+\dots} {j_1 + j_2 + j_3 + \dots \choose {j_1, j_2, j_3, \ldots }}  C_1^{-\frac{m}{\ell}} K_{\ell,2}^{j_1} K_{\ell,3}^{j_2} K_{\ell,4}^{j_3} \cdots.
	\end{equation*}
	Using the Multinomial Theorem again, we find
	\begin{align}
		\nonumber & \hspace{-2cm} \frac{C_1}{(\ell-1) \varrho_{\ell,n}^{\ell-1}} + \frac{C_2}{(\ell-2) \varrho_{\ell,n}^{\ell-2}} + \frac{C_3}{(\ell-3)\varrho_{\ell,n}^{\ell-3}} \\
		\nonumber  & \hspace{-1cm}=\frac{C_1}{\ell-1} n^{\frac{\ell-1}{\ell}} \sum_{k=0}^\infty n^{-\frac{k}{\ell}} \sum_{\substack{m_1, m_2, \ldots \geq 0 \\ m_1 + m_2 + \dots + \dots = \ell-1 \\ m_2 + 2m_3 + \dots = k}} {\ell - 1 \choose m_1, m_2, \ldots} D_{0}^{m_1} D_{1}^{m_2} \cdots  \\
		\label{eq:Phi-expanding} &\quad \hspace{-1cm}+\frac{C_2}{\ell-2} n^{\frac{\ell-2}{\ell}} \sum_{k=0}^\infty n^{-\frac{k}{\ell}} \sum_{\substack{m_1, m_2, \ldots \geq 0\\ m_1 + m_2 + \dots + \dots = \ell-2 \\ m_2 + 2m_3 + \dots = k}} {\ell - 2 \choose m_1, m_2, \ldots} D_{0}^{m_1} D_{1}^{m_2} \cdots  \\
		\nonumber  &\quad \hspace{-1cm}+\frac{C_3}{\ell-3} n^{\frac{\ell-3}{\ell}} \sum_{k=0}^\infty n^{-\frac{k}{\ell}} \sum_{\substack{m_1, m_2, \ldots \geq 0\\ m_1 + m_2 + \dots + \dots = \ell-3 \\ m_2 + 2m_3 + \dots = k}} {\ell - 3 \choose m_1, m_2, \ldots} D_{0}^{m_1} D_{1}^{m_2} \cdots  .
		\end{align} \par
	We next determine closed formulas for the exponent coefficients $A_{\ell,k}$. We first find with \Cref{paper:BBBF:T:main2} 
	\begin{align} \label{eq:A1}
		A_{\ell,1} & = \left( 1 + \frac{1}{\ell-1}\right) (\zeta(2) \cdots \zeta(\ell-1) (\ell-1)! \zeta(\ell))^{\frac{1}{\ell}}.
	\end{align}
		Also note that, combing \Cref{paper:BBBF:T:main2} and \Cref{lem:BBBF:paper:L:mainasy}, we obtain
	\begin{multline*}
		\frac{e^{n\p_{\ell,n}}G_{f_\ell}(\p_{\ell,n})}{\sqrt{2\pi}}\left(\sum\limits_{j=1}^N \frac{d_j }{n^{\nu_j}} + O_{L,R}\left(n^{-\min\left\{\frac{L+1}{\a+1}, \frac{R+\a}{\a+1}+\frac{\a+2}{2(\a+1)}\right\}}\right)\right) \\
		= \frac{C}{n^b}
		\exp\left(A_1n^\frac{\a}{\a+1}+\sum_{j=2}^M A_jn^{\a_j}\right)\left(1+\sum\limits_{j=2}^N \frac{B_j}{n^{\b_j}} + O_{L,R}\left(n^{-\min\left\{\frac{2L-\a}{2(\a+1)},\frac{R}{\a+1}\right\}}\right)\right),
	\end{multline*}
		for some numbers $0 \leq \alpha_M < \cdots < \alpha_2 < \frac{\alpha}{\alpha+1}$. Consequently with \eqref{eq:Main-Asy}, comparing the exponential terms with powers in $\mathcal{L} \cap [0,\infty)$ (note that the term $L_{f_\ell}'(0)$ is treated separately, so we have to subtract it),
	\begin{align}
		\label{eq:First}  \left[n\p_{\ell,n} + \Phi_{f_\ell}(\varrho_{\ell,n}) - L'_{g_\ell}(0)\right]_* & = A_1n^\frac{\a}{\a+1}+\sum_{j=2}^M A_jn^{\a_j} =:  \sum_{k=1}^\ell A_{\ell,k} n^{\frac{\ell-k}{\ell}}.
	\end{align}
	Using \Cref{lem:Set-L} employing \eqref{eq:nrhon}, \eqref{eq:Phi-Expansion}, and \eqref{eq:Phi-expanding}, to obtain for $k \geq 2$,
	\begin{align} 
		A_{\ell,k}  &= K_{\ell,k} + \frac{C_1}{\ell-1} \hspace{-0.4cm} \sum_{\substack{m_1, m_2, \ldots \geq 0 \\ m_1 + m_2 + \dots + \dots = \ell-1 \\ m_2 + 2m_3 + \dots = k-1}} \hspace{-0.4cm} {\ell - 1 \choose m_1, m_2, \ldots} D_{0}^{m_1} D_{1}^{m_2} \cdots\nonumber\\
		&\hspace{5cm}+ \frac{C_2}{\ell-2} \hspace{-0.4cm} \sum_{\substack{m_1, m_2, \ldots \geq 0\\ m_1 + m_2 + \dots + \dots = \ell-2 \\ m_2 + 2m_3 + \dots = k-2}} \hspace{-0.4cm} {\ell - 2 \choose m_1, m_2, \ldots} D_{0}^{m_1} D_{1}^{m_2} \cdots\nonumber\\
		&\hspace{5cm}+ \frac{C_3}{\ell-3} \hspace{-0.4cm} \sum_{\substack{m_1, m_2, \ldots \geq 0\\ m_1 + m_2 + \dots + \dots = \ell-3 \\ m_2 + 2m_3 + \dots = k-3}} \hspace{-0.4cm} {\ell - 3 \choose m_1, m_2, \ldots} D_{0}^{m_1} D_{1}^{m_2}\cdots\hspace{-0.05cm}.\label{eq:Ak} 
	\end{align}
	If $\ell \geq 6$, then we have $L'_{f_\ell}(0) = 0$, since $L_{f_\ell}$ has a zero in $s=0$ of order at least 2. This simplifies the formula to the one in Theorem \ref{thm:Main-Nl}. \Cref{paper:BBBF:T:main2}, \Cref{prop:Poles-of-Lg}, and  
	\begin{equation*}
		C  = \frac{(\ell-1)!^{\frac{1}{2\ell}} Z_\ell^{\frac12}}{\sqrt{2\pi \ell}} 
	\end{equation*}
	as well as \eqref{eq:A1} gives the value of $A_{\ell,1}$.
		With Lemma \ref{lem:Set-L} and \Cref{paper:BBBF:T:main2} we conclude that the exponents in the polynomial terms in the expansions of $N_\ell(n)$ are given by $\frac{1}{\ell} \N_0$, as $(\mathcal{M} + \mathcal{N}) \cap [0,\infty) = \frac{1}{\ell} \N_0$.
\end{proof}
\section{Log-Concavity and the proof of \Cref{thm:log}}\label{S:Log}
\subsection{Proof of \Cref{thm:log}}\hspace{0cm} In this subsection, we prove our general result on log-concavity.
\begin{proof}[Proof of \Cref{thm:log}]
	We have
	\begin{align}
		\label{eq:log-concave-1} & c(n)^2 - c(n+1) c(n-1) \sim C^2\vast(\frac{\exp\left(2\sum_{\lambda\in\mathcal S}A_\lambda n^\lambda\right)}{n^{2\kappa}}\left(\sum_{\mu\in\mathcal T} \frac{\beta_\mu}{n^\mu}\right)^2\\
		\nonumber & \hspace{2cm} -\frac{\exp\left(\sum_{\lambda\in\mathcal S}A_\lambda\left((n+1)^\lambda+(n-1)^\lambda\right)\right)}{(n+1)^\kappa(n-1)^\kappa}\sum_{\mu\in\mathcal T}\frac{\beta_\mu}{(n+1)^\mu}\sum_{\nu\in\mathcal T}\frac{\beta_\nu}{(n-1)^\nu}\vast).
	\end{align}
	We now claim that for all $\lambda \in \mathcal{S}$, we have 
	\begin{equation} \label{eq:Claim-1}
		\exp\left(A_\lambda\left((n+1)^\lambda+(n-1)^\lambda-2n^\lambda\right)\right) = 1 + \frac{\gamma_{\lambda,1}}{n^{2-\lambda}} + o\left(n^{-2+\lambda}\right)
	\end{equation}
	for $\gamma_{\lambda,1} \in \R$. To see this, write $\lambda = \frac{a}{b}$ with $\gcd(a,b)=1$ and set $x:=n^{-\frac1b}$. Then
	\begin{align*}
		(n+1)^\frac ab+(n-1)^\frac ab-2n^\frac ab &= \left(\frac1{x^b}+1\right)^\frac ab + \left(\frac1{x^b}-1\right)^\frac ab - 2x^{-a}\\
		&= \frac1{x^a} \left(\left(1+x^b\right)^\frac ab+\left(1-x^b\right)^\frac ab-2\right) =: f_{a,b}(x).
	\end{align*}
	Now, as $y \to 0$,
	\begin{equation*}
		g_\lambda(y) := (1+y)^\lambda+(1-y)^\lambda-2 = O\left(y^2\right).
	\end{equation*}
	Thus $f_{a,b}$ has a Taylor expansion of the shape
	\begin{equation*}
		f_{a,b}(x) = \sum_{j\ge1} \alpha_{\lambda,j} x^{2b j-a}.
	\end{equation*}
	Thus
	\begin{equation*}
		f_{a,b}\left(n^{-\frac1b}\right) = \sum_{j\ge1} \frac{\alpha_{\lambda,j}}{n^{\frac{2b j-a}b}} = \frac{\alpha_{\lambda,1}}{n^{2-\lambda}}+o\left(n^{-2+\lambda}\right).
	\end{equation*}
	Plugging in the expansion of the exponential function gives the claim, using that $0<\lambda < 1$. 
	
	Using \eqref{eq:Claim-1}, we obtain
	\begin{align*}
		\exp\left(\sum_{\lambda\in\mathcal S}A_\lambda\left((n+1)^{\lambda}+(n-1)^{\lambda}-2n^\lambda\right)\right) &= \prod_{\lambda \in \mathcal{S}} \left( 1 + \frac{\gamma_{\lambda,1}}{n^{2-\lambda}} + o\left(n^{-2+\lambda}\right)\right) \\
		&= 1 + \frac{\gamma_{\lambda^*,1}}{n^{2-\lambda^*}} + o\left( n^{-2+\lambda^*}\right).
	\end{align*}
	Note that $\gamma_{\lambda^*,1}=A_{\lambda^*}\alpha_{\lambda^*,1}$ and that $\alpha_{\lambda,1} = \lambda(\lambda-1)<0$.
	
	Next we claim that (for $n > 1$)
	\begin{equation} \label{eq:kappa-binomial}
		\frac1{(n+1)^\kappa(n-1)^\kappa} =\frac1{\left(n^2-1\right)^\kappa} = n^{-2\kappa} \left(1+\sum_{j\ge1}\frac{\delta_j}{n^{2j}}\right)
	\end{equation}
	for certain $\delta_j$. To see this, set $x:=n^{-2}$. Then we want
	\begin{equation*}
		\frac{x^{-\kappa}}{\left(\frac1x-1\right)^\kappa} = 1 + \sum_{j\ge1} \delta_j x^j.
	\end{equation*}
	The left-hand side is $\frac1{(1-x)^\kappa}$ and the claim follows. 
	
	Thus, by \eqref{eq:log-concave-1},
	\begin{align*}
		&c(n)^2 - c(n+1) c(n-1)\\
		&\sim C^2 \frac{\exp\left(2\sum_{\lambda\in\mathcal S}A_\lambda n^\lambda\right)}{n^{2\kappa}} \vast(\left(\sum_{\mu\in\mathcal T}\frac{\beta_\mu}{n^\mu}\right)^2\\
		&\hspace{1.3cm}- \left(1+\frac{\gamma_{\lambda^*,1}}{n^{2-\lambda^*}} + o\left(n^{-2+\lambda^*}\right)\right)\left(1+O\left(n^{-2}\right)\right) \sum_{\mu\in\mathcal T} \frac{\beta_\mu}{(n+1)^\mu} \sum_{\nu\in\mathcal T} \frac{\beta_\nu}{(n-1)^\nu}\vast).
	\end{align*}

	The sign of this is dictated by
	\begin{equation*}
		\sum_{\mu,\nu\in\mathcal T} \frac{\beta_\mu\beta_\nu}{n^{\mu+\nu}} - \left(1+\frac{\gamma_{\lambda^*,1}}{n^{2-\lambda^*}} + o\left(n^{-2+\lambda^*}\right)\right)\left(1+O\left(n^{-2}\right)\right) \sum_{\mu\in\mathcal T} \frac{\beta_\mu}{(n+1)^\mu} \sum_{\nu\in\mathcal T} \frac{\beta_\nu}{(n-1)^\nu}.
	\end{equation*}
	As $\beta_0 = 1$, the above equals mod $(o(n^{-2+\lambda^*}))$,
	\begin{align}
		&\sum_{\substack{\mu,\nu\in\mathcal T\\0\le \mu+\nu\le2-\lambda^*}} \frac{\beta_\mu\beta_\nu}{n^{\mu+\nu}} - \sum_{\substack{\mu,\nu\in\mathcal T\\0\le \mu+\nu\le2-\lambda^*}} \frac{\beta_\mu\beta_\nu}{(n+1)^\mu(n-1)^\nu} - \frac{\gamma_{\lambda^*,1}}{n^{2-\lambda^*}}\nonumber\\
		&\hspace{1cm}= 2\sum_{\substack{\mu\in\mathcal T\\0\le \mu\le1-\frac{\lambda^*}2}} \beta_\mu^2 \left(\frac1{n^{2\mu}}-\frac1{(n+1)^\mu(n-1)^\mu}\right)\label{E:uni}\\ 
		&\hspace{2cm}+ \sum_{\substack{\mu\in\mathcal T\\0\le\mu<\nu\\1\le \mu+\nu\le2-\lambda^*}} \beta_\mu \beta_\nu \left(\frac2{n^{\mu+\nu}}-\frac1{(n+1)^\mu(n-1)^\nu}-\frac1{(n+1)^\nu(n+1)^\mu}\right) - \frac{\gamma_{\lambda^*,1}}{n^{2-\lambda^*}}.\nonumber
	\end{align}
	Next, by \eqref{eq:kappa-binomial} we obtain that (for $n > 1$)
	\begin{equation*}
		\frac1{(n+1)^\mu(n-1)^\mu} 
		= n^{-2\mu} + O\left(n^{-2\mu-2}\right).
	\end{equation*}
	Similarly, we see that, for certain $\rho_r$,
	\begin{equation*}
		\frac1{(n+1)^\mu(n-1)^\nu} + \frac1{(n+1)^\nu(n-1)^\mu} = 2n^{-\mu-\nu} \left(1+\sum_{r\ge1}\frac{\rho_r}{n^{2r}}\right).
	\end{equation*}
	Thus \eqref{E:uni} becomes
	\begin{equation*}
		O\left(n^{-2}\right) - \frac{\gamma_{\lambda^*,1}}{n^{2-\lambda^*}}.
	\end{equation*}
	To conclude the claim, we note that we have
	\begin{equation*}
		\sgn(-\gamma_{\lambda^*,1}) = \sgn(A_{\lambda^*}) = 1.\qedhere
	\end{equation*}
\end{proof}
\subsection{Examples} \label{subsec:Examples}
We can show the following result.
\begin{corollary} \label{cor:pf} 
	Let $f \colon \N \to \N_0$ satisfy all conditions of \Cref{paper:BBBF:T:main2}, and assume that we can choose $L$ in \ref{paper:main:1} arbitrarly large. Assume furthermore that $L_f(s)$ has a meromorphic continuation to $\C$ with only rational poles. Then $p_f(n)$ is log-concave for $n$ sufficiently large. 
\end{corollary}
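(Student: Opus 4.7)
The plan is to extract from \Cref{paper:BBBF:T:main2} an asymptotic expansion for $p_f(n)$ of exactly the shape required by the hypothesis of \Cref{thm:log}, and then invoke that theorem directly. Three matching conditions must be checked: the exponent set in the exponential factor is a finite subset of $\mathbb{Q}^+\cap(0,1)$, the exponent set in the polynomial factor is a subset of $\mathbb{Q}_0^+$, and the leading exponential coefficient is strictly positive.

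I would first apply \Cref{paper:BBBF:T:main2} with both the parameter $L$ from (P1) and the analyticity abscissa $R$ from (P2) taken arbitrarily large; the former is permitted by hypothesis, and the latter by the assumed meromorphic continuation of $L_f$ to $\mathbb{C}$ (selecting $R$ to avoid the discrete set of poles). The error term
\[
O_{L,R}\!\left(n^{-\min\!\left\{\frac{2L-\alpha}{2(\alpha+1)},\,\frac{R}{\alpha+1}\right\}}\right)
\]
then can be pushed below any prescribed negative power of $n$, so $p_f(n)$ admits a genuine asymptotic expansion
\[
p_f(n)\sim\frac{C}{n^{b}}\exp\!\left(A_1 n^{\alpha/(\alpha+1)}+\sum_{j\ge 2}A_j n^{\alpha_j}\right)\!\left(1+\sum_{j\ge 2}\frac{B_j}{n^{\beta_j}}\right).
\]

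Next I would verify the rationality/finiteness conditions. By its definition, $\mathcal{L}$ is built from $\mathcal{P}_R$ via rational operations scaled by $\tfrac{1}{\alpha+1}$; since $\alpha$ and all poles of $L_f$ are rational, $\mathcal{L}\subset\mathbb{Q}$. Only the $\alpha_j$ lying in $[0,\alpha/(\alpha+1)]\subset[0,1)$ appear in the expansion, so those with $A_j\neq 0$ and $\alpha_j>0$ form a finite set $\mathcal{S}\subset\mathbb{Q}^+\cap(0,1)$ (an $\alpha_j=0$ term, if present, contributes only a constant that I would absorb into $C$). An identical argument using $\mathcal{M}+\mathcal{N}\subset\mathbb{Q}_0^+$ gives the polynomial exponent set $\mathcal{T}$, with $\beta_0=1$.

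The remaining step, which I expect to be the only truly nontrivial one, is the sign $A_{\lambda^*}>0$. Since $\alpha>0$, one has $\lambda^*=\alpha_1=\alpha/(\alpha+1)$ (provided $A_1\neq 0$), and the closed form $A_1=(1+1/\alpha)(\omega_\alpha\Gamma(\alpha+1)\zeta(\alpha+1))^{1/(\alpha+1)}$ from \Cref{paper:BBBF:T:main2} reduces the question to $\omega_\alpha>0$. I would handle this via Landau's theorem: since $f\ge 0$, the Dirichlet series $L_f$ has non-negative coefficients, so its abscissa of convergence is a singularity on the real axis; meromorphicity in a neighborhood forces that singularity to be a pole, hence it must equal $\alpha$, and then $\omega_\alpha=\lim_{s\to\alpha^+}(s-\alpha)L_f(s)\ge 0$ is nonzero by simplicity of the pole. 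With $A_1>0$ confirmed, all hypotheses of \Cref{thm:log} hold and log-concavity of $p_f(n)$ for $n$ sufficiently large follows at once.
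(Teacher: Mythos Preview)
Your proposal is correct and follows essentially the same route as the paper: apply \Cref{paper:BBBF:T:main2} to obtain an asymptotic expansion of the shape required by \Cref{thm:log}, observe that rational poles force rational exponents, and check $A_{\lambda^*}=A_1>0$. The only notable difference is that the paper simply asserts $A_1>0$ ``by \Cref{paper:BBBF:T:main2}'' (implicitly deferring to \cite{BBBF23} for the positivity of $\omega_\alpha$), whereas you supply a self-contained justification via Landau's theorem for Dirichlet series with non-negative coefficients; this is a welcome addition rather than a different strategy.
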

\begin{proof} 
	\Cref{paper:BBBF:T:main2} provides an expansion for $p_f(n)$ as in \Cref{thm:log}, since the rational poles of $L_f(s)$ guarantee that all exponents occuring in the expansion of $p_f(n)$ are again rational. Note that $\lambda^* = \frac{\alpha}{\alpha+1}$ and\footnote{Note the abuse of notation.} $A_1 > 0$ again by \Cref{paper:BBBF:T:main2}. This gives then the claim.
\end{proof}
There are several further applications. Examples are, besides the classical partitions and plane partitions, partitions into $k$-gonal numbers $p_k(n)$, and the number of $n$-dimensional representations for the groups $\mathfrak{su}(3)$ and $\mathfrak{so}(5)$, denoted by $r_{\mathfrak{su}(3)}(n)$ and $r_{\mathfrak{so}(5)}(n)$, respectively. The asymptotic behavior of the numbers $r_{\mathfrak{su}(3)}(n)$ was first studied by Romik \cite{Ro} and later refined by two of the authors in \cite{BF}. Asymptotic expressions for $r_{\mathfrak{so}(5)}(n)$ and $p_k(n)$ were given in \cite{BBBF23}. We directly obtain the following:
\begin{corollary} \label{cor:pkrr} 
	Let $k \geq 3$. For $n$ sufficiently large, the sequences $p_k(n)$, $r_{\mathfrak{su}(3)}(n)$, and $r_{\mathfrak{so}(5)}(n)$ are log-concave. 
\end{corollary}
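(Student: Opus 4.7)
The plan is to deduce \Cref{cor:pkrr} directly from \Cref{cor:pf}. To this end, I would verify, for each of the three sequences $p_k(n)$, $r_{\mathfrak{su}(3)}(n)$, $r_{\mathfrak{so}(5)}(n)$, that it takes the shape $p_f(n)$ for an $f\colon\mathbb{N}\to\mathbb{N}_0$ meeting all hypotheses of \Cref{cor:pf}: namely \ref{paper:main:1}--\ref{paper:main:4}, with $L$ allowed arbitrarily large in \ref{paper:main:1}, and with $L_f(s)$ meromorphically continuable to $\mathbb{C}$ with only rational poles.

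For $p_k(n)$ the generating function factors as $\prod_{m\ge1}(1-q^{P_k(m)})^{-1}$, where $P_k(m)=\tfrac{(k-2)m^2-(k-4)m}{2}$, so we take $f(n)=\#\{m\ge1:P_k(m)=n\}\in\{0,1\}$ for $k\ge3$. Since the $k$-gonal numbers form a set of density zero in $\mathbb{N}$, the complement $\LL\setminus(p\mathbb{N}\cap\LL)$ is infinite for every prime $p$, so $L$ can be chosen arbitrarily large. The analytic properties of $L_f(s)=\sum_{m\ge1}P_k(m)^{-s}$, including meromorphic continuation to $\mathbb{C}$, rationality of poles, and the growth bound \ref{paper:main:4}, were established in \cite{BBBF23}. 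An identical verification, already carried out in \cite{Ro, BF} for $\mathfrak{su}(3)$ and in \cite{BBBF23} for $\mathfrak{so}(5)$, handles the two representation sequences: in each case the product form $\prod_{n\ge1}(1-q^n)^{-f(n)}$ with an explicit arithmetic $f$ of appropriate sparsity yields \ref{paper:main:1} with arbitrary $L$, while \ref{paper:main:3}--\ref{paper:main:4} together with the rationality of the poles of $L_f$ follow from the analyses in those references.

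Once these checks are in place, \Cref{cor:pf} applies verbatim to each of $p_k(n)$, $r_{\mathfrak{su}(3)}(n)$, and $r_{\mathfrak{so}(5)}(n)$, producing an asymptotic expansion of the type required in \Cref{thm:log} with $\lambda^{\ast}=\tfrac{\alpha}{\alpha+1}$ and $A_{\lambda^{\ast}}>0$ (the leading exponential coefficient from \Cref{paper:BBBF:T:main2}). Log-concavity for $n$ sufficiently large is then immediate.

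The only step that is not pure invocation of known results is confirming, for each sequence, that the exponents appearing in the expansion of $p_f(n)$ supplied by \Cref{paper:BBBF:T:main2} lie in $\mathbb{Q}$; this in turn reduces to the rationality of the poles of $L_f(s)$. I expect this to be the main obstacle in writing out full details, though not a genuine difficulty, since in all three cases the rationality follows either from the elementary zeta-factor structure of $L_f$ (as in $\mathfrak{su}(3)$ and $\mathfrak{so}(5)$) or from the explicit polynomial form of $P_k(m)$ combined with the Hurwitz-zeta-type decomposition used in \cite{BBBF23}.
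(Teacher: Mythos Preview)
Your proposal is correct and follows essentially the same route as the paper: the paper simply states ``We directly obtain the following'' after recalling that asymptotic expansions for $p_k(n)$, $r_{\mathfrak{su}(3)}(n)$, and $r_{\mathfrak{so}(5)}(n)$ were established in \cite{BBBF23}, \cite{Ro}, and \cite{BF}, leaving the verification of the hypotheses of \Cref{cor:pf} implicit. One minor remark: once \Cref{cor:pf} applies, it already yields log-concavity directly, so there is no need to invoke \Cref{thm:log} again as a separate step.
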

Finally, we give another example in a slightly different direction.
 For $d \in \N_0$, let\footnote{see \cite{HN22}, where a generalization of generating function for the partition and plane partition function was studied}
\begin{align}\label{def:nd}
	\sum_{n \geq 0} \mathrm{p}_d(n)q^n := \prod_{n \geq 1} \left( 1 - q^n\right)^{-n^d}.
\end{align}
Note that we have $\mathrm{p}_0(n) = p(n)$ and $\mathrm{p}_1(n) = \mathrm{pp}(n)$ is the plane partition function. Note that the corresponding $L$-series is given by 
\begin{align*}
	\sum_{n \geq 1} \frac{n^d}{n^s} = \zeta(s-d).
\end{align*}
It is not hard to check that the conditions \ref{paper:main:1}, \ref{paper:main:3}, and \ref{paper:main:4} are satisfied, and we have 
\begin{align*}
	\mathcal{P}_R \subseteq \{d+1\} \cup \{-1,-2,-3, \ldots\}
\end{align*} 
for the set of poles $s \not= 0$ of $L_d^*(s) := \Gamma(s)\zeta(s+1)\zeta(s-d)$. Note that $\alpha = d+1$ and
\begin{align*}
	b & = \frac{1}{2(d+2)} - \frac{\zeta(-d)}{d+2} + \frac12, \quad A_1 = \left( 1 + \frac{1}{d+1}\right)\left( (d+1)! \zeta(d+2) \right)^{\frac{1}{d+2}}, \\
	C & = \frac{e^{\zeta'(-d)}((d+1)!\z(d+2))^\frac{\frac12-\z(-d)}{d+2}}{\sqrt{2\pi(d+2)}}.
\end{align*}
As a consequence, we have by Theorem \ref{paper:BBBF:T:main2}
\begin{align*}
	\mathrm{p}_d(n) \sim \frac{C}{n^b} e^{A_1n^{\frac{d+1}{d+2}}} \left( 1 + \sum_{j \geq 1} \frac{E_{d,j}}{n^{\frac{j}{d+2}}}\right)
\end{align*}
with certain $E_{d,j}$. Note that, by the same arguments used for the other examples above, $\mathrm{p}_d(n)$ is log-concave for $n$ sufficiently large. In \cite{HNT23} it had been proven that in the case of
plane partitions, $p_1(n)$ is log-concave for almost all $n$ and
conjectured that this is already valid for $n \geq 12$.
This conjecture had been proven by Ono, Pujahari, and Rolen
\cite{OPR22}.

\section{Proof of \Cref{thm:BO}} \label{sect:BO}
The idea of the following proof is similar to that of \Cref{thm:log}.
\begin{proof}[Proof of \Cref{thm:BO}]
	Assume that $a$, $b\gg1$. Then 
	\begin{multline*}
	c(a) c(b) - c(a+b) \sim \frac{C^2}{a^\kappa b^\kappa} \exp\left(\sum_{\lambda\in\mathcal S} A_\lambda \left(a^\lambda+b^\lambda\right)\right) - \frac C{(a+b)^\kappa} \exp\left(\sum_{\lambda\in\mathcal S} A_\lambda (a+b)^\lambda\right)\\
	= \frac{C^2}{a^\kappa b^\kappa} \exp\left(\sum_{\lambda\in\mathcal S} A_\lambda \left(a^\lambda+b^\lambda\right)\right) \left(1-\frac{a^\kappa b^\kappa}{C(a+b)^\kappa}\exp\left(\sum_{\lambda\in\mathcal S} A_\lambda \left((a+b)^\lambda-a^\lambda-b^\lambda\right)\right)\right).
	\end{multline*}
	Without loss of generality we may assume that $a\ge b$ and write $a=xb$, $x\ge1$. Then 
	\begin{multline*}
	\frac{a^\kappa b^\kappa}{C(a+b)^\kappa}\exp\left(\sum_{\lambda\in\mathcal S} A_\lambda \left((a+b)^\lambda-a^\lambda-b^\lambda\right)\right)\\
	= \frac{x^\kappa b^\kappa}{C(x+1)^\kappa}\exp\left(\sum_{\lambda\in\mathcal S} A_\lambda b^\lambda \left((1+x)^\lambda-x^\lambda-1\right)\right).
	\end{multline*}
	Let $f_\lambda(x):=(1+x)^\lambda-x^\lambda-1$. It is not hard to see that $f'_\lambda(x) < 0$. As $f_\lambda(0)=0$, we have $f_\lambda(x)<0$ for $x\ge1$. Moreover , using l'Hospital, 
	\begin{align*}
	\lim_{x\to\infty} \left((x+1)^\lambda-x^\lambda-1\right) = -1.
	\end{align*}
	We conclude $-1 \leq f_\lambda(x) < 0$ for all $x \geq 1$.
	This gives the claim.
\end{proof}

\section{Open questions}
We leave some open questions to the interested reader.
\begin{enumerate}[leftmargin=19pt, labelwidth=!, labelindent=0pt]
	\item What can be said about Turán type inequalities for the sequences studied? Note that the case $\ell=2$ of the partition function was treated in (\cite{GORZ19}, Theorems 1 and 2) by showing that certain classes of Jensen polynomials have real roots. The more general case $\ell\ge3$ will be much harder.
	\item It would be interesting to make the results of this paper (in particular log-concavity) explicit. The hard part would be to make \cite{BBBF23} explicit. For example, the case $\ell=2$ of the partition function was treated in (\cite{DP15}, Theorem 1.1). Note that this turns out to be much easier than the higher cases of $\ell$ as in the partition case one has an exact formula, which yields inequalities for the partition function.
	\item According to a conjecture by Chen (see Conjecture 1.2 in \cite{DP15}), the sequence $|C_{2,n}| = n! p(n)$ is log-convex for all $n > 1$, which was shown by DeSalvo and Pak 2015 (\cite{DP15}, Theorem 4.1). The question now is whether this is also true for sufficiently large $n$ for the $|C_{\ell,n}|$ for all $\ell \geq 3$, and to what extent this could be related to the results of this work.
\end{enumerate}

\end{document}